\theoremstyle{plain}
\newtheorem{theorem} {Theorem}[section]
\newtheorem{pro}{Proposition}[section]
\newtheorem{definition}{Definition}[section]
\newtheorem{lemma}[theorem]{Lemma}
\newtheorem{rem}{Remark}[section]
\newcommand{\mat}{\mathbb}
\newcommand{\cal}{\mathcal}
\newcommand{\FF}{\mathcal{F}}
\newcommand{\E}{\mathbb{E}}
\newcommand{\N}{\mathbb{N}}
\newcommand{\LL}{\mathbb{L}}
\newcommand{\R}{\mathbb{R}}
\newcommand{\PP}{\mathbb{P}}
\begin{document}
	\title[ Asymptotically periodic solution of a Sto. Diff. Eq. ]{On asymptotically periodic  solution of a  Stochastic Differential  equation}

\maketitle{}

\centerline{Solym Mawaki MANOU-ABI* $^{\rm 1,2}$, William DIMBOUR $^{\rm 3}$}

\vspace{12pt}

\centerline{$^{\rm 1}$ CUFR  de Mayotte}
\centerline{D\'epartement Sciences et Technologies}
\centerline{solym.manou-abi@univ-mayotte.fr }
\centerline{Phone number : (+33) (0) 7 51 48 36 63}

\vspace{12pt}

\centerline{$^{\rm 2}$ Institut Montpelli\'erain Alexander Grothendieck}
\centerline{UMR CNRS 5149, Universit\'e de Montpellier}
\centerline{solym-mawaki.manou-abi@umontpellier.fr}

\vspace{12pt}

\centerline{$^{\rm 3}$ UMR Espace-Dev, Universit\'e de Guyane}
\centerline{Campus de Troubiran 97300}
\centerline{ Cayenne  Guyane (FWI)}
\centerline{william.dimbour@espe-guyane.fr}

\renewcommand{\thefootnote}{}
\footnote{*Corresponding author}
\renewcommand{\thefootnote}{\arabic{footnote}}
\setcounter{footnote}{0}
\date{}
\maketitle

\begin{abstract}
In this paper, we first introduce the concept and properties  of  $\omega$-periodic limit process. Then we apply specific criteria obtained to investigate  asymptotically $\omega$-periodic mild solutions of  a Stochastic Differential Equation  driven by a Brownian motion. 
Finally, we give an  example to show  usefulness of the theoritical results that we obtain  in the paper.   
\end{abstract}
\vspace{12pt}

{\bf MSC } : 34C25, 34C27, 60H30, 34 F05.

{\bf Keywords} : Square-mean asymptotically periodic, Square-mean periodic limit,  Stochastic differential equation, Semigroup
Mild solution.

\section{Introduction}
The  recurrence of dynamics for stochastic and deterministic processes produced  by many different kinds of stochastic and deterministic  equations is one of the most important topics in the qualitative theory of stochastic processes and functions, due both to its mathematical interest and its applications in many scientific fields, such as mathematical biology, celestial mechanics, non linear vibration, control theory, to name few. The concept of periodicity was studied for dynamics of  stochastic processes and functions.  However  the dynamics observed  in  some phenomena in the real world are not periodic, but  almost approximately or asymptotically periodic; see for instance  \cite{cordu,kundert,ahmad} for  almost periodic observations.\\
 
In the past several decades many authors suggested and developed several extensions of the concept of periodicity, in the deterministic and stochastic case, such as the  almost automorphy, pseudo almost periodicity, asymptotically periodicity, etc.
  (see \cite{bez, beza, bezan, manou, cao, chang, cuevas, dimbour, dm, sun, zhi1, zhi2, xie2, zhang}  and references therein)\\
 
Recently, the concept of  periodic limit function has been introduced by Xie and Zhang \cite{xie1}  to generalize the notion of asymptotic periodicity. The authors  investigate some properties of periodic  limit functions in order to study the existence and uniqueness of asymptotically periodic solutions of some differential equations. However, to the best of our knowledge, there is no work or applicable results for stochastic  differential equations.Therefore, in this paper, we will  introduce  the notion of square mean periodic limit process. Then we'll investigate their qualitative properties   in  order to study the existence of square mean asymptotically periodic mild solution to the following Stochastic Differential Equation (SDE) driven by Brownian motion : 

\begin{equation*}
\label{eq: base}
\left\{
\begin{array}{l}
dX(t) =  A X(t)dt + f(t,X(t))dt  + g(t,X(t))dB(t), \quad \quad t \geq 0 \\
X(0) =  c_{0},
\end{array}
\right.
\end{equation*}
where  $ c_{0} \in \LL^{2}(\PP,\mat{H})$ and $A$ is an infinitesimal generator which generates a $C_0$ semigroup, denoted by $(T(t))_{t\geq 0}$. 
In addition,  $$   f: \R \times \LL^{2}(\PP, \mat{H}) \rightarrow  \LL^{2}(\PP, \mat{H}),    $$

$$  g: \R \times \LL^{2}(\PP, \mat{H}) \rightarrow  \LL^{2}(\PP, \mat{H})  $$
are   Lipschitz continuous and  bounded, and $B(t)$  is a two-sided  standard one-dimensional Brownian motion, which is defined on  the filtered complete probability space $(\Omega,\mathcal{F},\cal{F}_{t},\mat{P})$   with values in the separable Hilbert space  $\mat{H}$.  Here  $\cal{F}_{t}=\sigma\{ B(u)-B(v) / u, v \leq t \}$.

The paper is organised as follows:  In  Section 2, we preliminarily introduce the space of  square-mean $\omega$-periodic limit process and study properties of such processes. It also includes some results, not only on the completeness of the space that consists of the square-mean $\omega$-periodic limit processes but also on the composition of such processes.  Based on the results in Section 2 and given some suitable conditions, we prove in Section 3 the existence as well as the uniqueness of the square-mean  asymptotically $\omega$-periodic solution  for the  above SDE.  Finally, an illustrative example is provided to show the feasibility of the theoretical results developed in the paper.

\section{Square mean omega-periodic limit process}
This section is concerned with some notations, definitions, lemmas and preliminary facts that  may be used
in what follows.  Throughout this paper we consider a real separable Hilbert space  $(\mat{H},||.||)$ and
a probability space $(\Omega, \mathcal{F},\mathbb{P})$  equipped with a filtration $(\mathcal{F}_{t})_{t}$.  Denote by  $\LL^{2}(\PP,\mat{H})$  the space of all strongly measurable square integrable $\mat{H}$-valued random variables such that
$$ \E ||X||^{2} = \int_{\Omega} ||X(\omega)||^{2}d\PP(\omega) < \infty.  $$
For $X \in \LL^{2}(\PP,\mat{H})$, let $||X||_{2} =  (\E ||X||^{2})^{1/2}$. 
Then it is routine to check that $\LL^{2}(\PP,\mat{H})$ is a Hilbert space equipped with the norm $||.||_{2}$.


\begin{definition}
A stochastic process $X : \R_+ \rightarrow \LL^{2}(\PP, \mat{H})$ is said to be continuous whenever 
$$  \lim_{t\rightarrow s} \E || X(t)-X(s)||^{2} = 0.       $$
\end{definition} 

\begin{definition}
A stochastic process $X : \R_+ \rightarrow \LL^{2}(\PP, \mat{H})$ is said to be bounded if 
there exists a constant $K > 0$ such that $$ \E ||X(t)||^{2} \leq K \quad \forall t \geq 0 $$
\end{definition} 
By $CUB \big(\R_{+},\LL^{2}(\PP, \mat{H})\big)  $ we denote the collection of all continuous
 and uniformly bounded stochastic processes from $\R_{+} \rightarrow \LL^{2}(\PP, \mat{H}) $.

\begin{definition}
\label{def3}
A continous  and  bounded stochastic process $X : \R_{+} \rightarrow \LL^{2}(\PP, \mat{H})$ is said to 
be square mean $\omega$-periodic limit  if there exists $\omega >0$ such that 
 $$\lim_{n \rightarrow +\infty } \E || X(t+n\omega)-\tilde{X}(t)||^{2} = 0$$
 is well defined for each $t\geq 0$ when $n\in \mathbb{N}$ for some stochastic process\\ 
 $\tilde{X} : \R_{+} \rightarrow  \LL^{2}(\PP, \mat{H})$.
 \end{definition} 
 
 \begin{rem}  
For all $ t \geq 0$,  $\tilde{X}(t) $ is the limit of $ X(t+n\omega)$ in $\LL^{2}(\PP, \mat{H})$ when 
$n \to + \infty$, when it exists. 
The collection of such  $\omega$-periodic  limit processes is denoted by $P_{\omega}L\big(\R_{+},\LL^{2}(\PP, \mat{H})\big) $.
Note also that  the process $\tilde{X}$ in the previous definition is measurable but not neccessarily continuous.
\end{rem}
In the following Proposition, we list some properties of square mean $\omega$-periodic limit process. 

\begin{pro}
\label{proposition}
Let $X$ be square mean $\omega$-periodic limit process such that 
$$\lim_{n \rightarrow +\infty } \E || X(t+n\omega)-\tilde{X}(t)||^{2} = 0$$
 is well defined for each $t\geq 0$ when $n\in \mathbb{N}$ for some stochastic process\\
 $\tilde{X} : \R_{+} \rightarrow  \LL^{2}(\PP, \mat{H})$. \\ If $X, X_{1}, X_{2}$ 
 are square mean $\omega$-periodic limit processes  then the following are true :
 \begin{itemize}
 \item[(a)] $X_{1} + X_{2}$ is  a square mean $\omega$-periodic limit process.
 \item[(b)] $cX$ is a square mean $\omega$-periodic limit process for every scalar $c$. 
 \item[(c)]  We have $$ \E || \tilde{X}(t+\omega)-\tilde{X}(t)||^{2} = 0.$$
 \item[(d)]  $\tilde{X}$ is bounded on $\R_+$ and $||\tilde{X}||_{\infty} \leq ||X||_{\infty} \leq K$. 
 \item[(e)] $X_{a}(t) = X(t+a)$ is a square mean  $\omega$-periodic limit process for \\
 each fixed $a \in \R_+$.
 \end{itemize}
\end{pro}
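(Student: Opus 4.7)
The plan is to dispatch (a)--(e) by elementary $L^2$-arguments, taking as candidate limit the natural object inherited from $\tilde X$ (or $\tilde X_1, \tilde X_2$). Throughout I will rely on the triangle inequality in $\LL^{2}(\PP,\mat{H})$ in the convenient form $\E\|U+V\|^2 \leq 2\E\|U\|^2 + 2\E\|V\|^2$, together with continuity of $\|\cdot\|_2$ under $L^2$-convergence. For (a), I would set $\widetilde{X_1+X_2} := \tilde X_1 + \tilde X_2$ and bound $\E\|(X_1+X_2)(t+n\omega) - (\tilde X_1+\tilde X_2)(t)\|^2$ by twice the sum of the two defining limits, both of which tend to $0$; part (b) is similar with $\widetilde{cX} := c\tilde X$, since scalar multiplication simply extracts a factor $|c|^2$.

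For (c), the key observation is that $\bigl(X(t+(n+1)\omega)\bigr)_n$ is a tail of $\bigl(X(t+n\omega)\bigr)_n$, hence converges in $\LL^{2}(\PP,\mat{H})$ to the same limit $\tilde X(t)$; on the other hand, applying Definition \ref{def3} at the point $t+\omega \geq 0$ identifies the $L^2$-limit of this same sequence with $\tilde X(t+\omega)$. Uniqueness of limits in $\LL^{2}(\PP,\mat{H})$ then forces $\E\|\tilde X(t+\omega) - \tilde X(t)\|^2 = 0$.

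For (d), continuity of $\|\cdot\|_2$ under $L^2$-convergence gives $\E\|\tilde X(t)\|^2 = \lim_n \E\|X(t+n\omega)\|^2 \leq K$ for every $t \in \R_+$, so $\tilde X$ is uniformly bounded with $\|\tilde X\|_\infty \leq \|X\|_\infty \leq K$. For (e), the natural candidate is $\widetilde{X_a}(t) := \tilde X(t+a)$: applying the defining property of $X$ at the point $t+a \geq 0$ immediately yields $\E\|X(t+a+n\omega) - \tilde X(t+a)\|^2 \to 0$, while continuity and uniform boundedness of $X_a$ are inherited directly from the corresponding properties of $X$. No individual step is technically difficult; the only mild subtlety lies in (c), where one must identify the index shift correctly and invoke uniqueness of $L^2$-limits.
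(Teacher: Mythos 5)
Your proposal is correct and, for part (c) --- the only part the paper actually proves --- it follows essentially the same route: identifying $\tilde X(t+\omega)$ and $\tilde X(t)$ as limits of the same sequence $X(t+(n+1)\omega)$, your appeal to uniqueness of $L^2$-limits being just a repackaging of the paper's explicit triangle-inequality estimate. The remaining parts (a), (b), (d), (e), which the paper dismisses as straightforward, are handled correctly with the natural candidates $\tilde X_1+\tilde X_2$, $c\tilde X$, and $\tilde X(\cdot+a)$.
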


 \begin{proof}
 The proof is  straightforward but we will only prove the statement in $(c)$. \\
 To this end, note that for each $n \geq 1$, we have : 
  \begin{align*}
0 \leq \E \left|\left| \tilde{X}(t+\omega)- \tilde{X}(t)\right|\right|^{2}   
 & \leq 2 \E \left|\left| \tilde{X}(t+\omega)- X(t+(n+1)\omega)\right|\right|^{2}  \\
 & + 2 \E \left|\left| X(t+(n+1)\omega)- \tilde{X}(t)\right|\right|^{2},\\
\end{align*}
Let $\epsilon >0 $,  for $N$ sufficiently large if $n \geq N$ then 
  $$\E \left|\left| \tilde{X}(t+\omega)- X(t+(n+1)\omega)\right|\right|^{2} \leq \epsilon/2$$ and 
  $$\E \left|\left| X(t+(n+1)\omega)- \tilde{X}(t)\right|\right|^{2} \leq \epsilon/2.$$
  so that  $ \E \left|\left| \tilde{X}(t+\omega)- \tilde{X}(t)\right|\right|^{2}   \leq \epsilon.$
   Thus 
 $ \E \left|\left| \tilde{X}(t+\omega)- \tilde{X}(t)\right|\right|^{2}   = 0.$
 \end{proof}
 
 Because of the above Proposition, we give name of $\omega$-periodic limit process in Definition \ref{def3}.

 \begin{rem}
Note that if 
$$ \E ||\tilde{X}(t+\omega)- \tilde{X}(t)||^{2} = 0  $$ then 
$$ \E ||\tilde{X}(t+p \omega)- \tilde{X}(t)||^{2}  =0 \;  \textrm{for all } p \geq 1.   $$
\end{rem}

\begin{theorem}
The space $P_{\omega}L\big(\R_{+},\LL^{2}(\PP, \mat{H})\big) $ is a Banach space  equipped with the norm 
$ ||X ||_{\infty} = \sup_{t \geq 0} \big( \E ||X(t)||^{2}\big) ^{1/2} =  \sup_{t \geq 0}  ||X(t)||_{2}.$
\end{theorem}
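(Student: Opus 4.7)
The plan is to show completeness by lifting a Cauchy sequence in $P_{\omega}L\big(\R_{+},\LL^{2}(\PP,\mathbb{H})\big)$ to a limit process and producing the required limit process $\tilde X$ as the uniform limit of the associated $\tilde X_k$. Addition and scalar multiplication stay in the space by parts (a) and (b) of the previous Proposition, and $\|\cdot\|_\infty$ is a norm (the sup norm on bounded continuous processes), so only completeness needs work.

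Let $(X_k)_{k\geq 1}$ be Cauchy in the sup norm. Since $CUB\big(\R_{+},\LL^{2}(\PP,\mathbb{H})\big)$ is a Banach space under $\|\cdot\|_\infty$ (this is a standard uniform-convergence argument) and $P_\omega L \subset CUB$, there exists $X \in CUB\big(\R_{+},\LL^{2}(\PP,\mathbb{H})\big)$ with $\|X_k - X\|_\infty \to 0$. It remains to construct $\tilde X$ and verify $X\in P_\omega L$.

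For each $k$ let $\tilde X_k$ denote the limit associated with $X_k$ by Definition~\ref{def3}. For fixed $t\geq 0$ and indices $k,j$, the triangle inequality in $\LL^2(\PP,\mathbb{H})$ gives
\[
\|\tilde X_k(t)-\tilde X_j(t)\|_2 \leq \|\tilde X_k(t)-X_k(t+n\omega)\|_2 + \|X_k(t+n\omega)-X_j(t+n\omega)\|_2 + \|X_j(t+n\omega)-\tilde X_j(t)\|_2.
\]
Letting $n\to\infty$ the outer two terms vanish, leaving $\|\tilde X_k(t)-\tilde X_j(t)\|_2 \leq \|X_k-X_j\|_\infty$. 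Hence $(\tilde X_k)$ is uniformly Cauchy in $k$, and so converges uniformly (in the sup norm on $\R_+$) to a process $\tilde X:\R_+ \to \LL^2(\PP,\mathbb{H})$, which is measurable as a pointwise $\LL^2$-limit.

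Finally, one splits
\[
\|X(t+n\omega)-\tilde X(t)\|_2 \leq \|X-X_k\|_\infty + \|X_k(t+n\omega)-\tilde X_k(t)\|_2 + \|\tilde X_k-\tilde X\|_\infty,
\]
and given $\varep>0$ chooses $k$ large so the first and third terms are each $<\varep/3$, then $N$ large (depending on $k,t$) so that for $n\geq N$ the middle term is $<\varep/3$. Thus $\E\|X(t+n\omega)-\tilde X(t)\|^2 \to 0$ for every $t\geq 0$, so $X \in P_\omega L\big(\R_{+},\LL^{2}(\PP,\mathbb{H})\big)$, and the space is complete. The only subtle step is the uniform Cauchy estimate on $(\tilde X_k)$; once that is in place, the rest is a standard three-$\varep$ juggling, but getting the order of quantifiers right (choose $k$ first, then $N=N(k,t)$) is the point one has to be careful with.
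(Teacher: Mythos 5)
Your proof is correct and follows essentially the same route as the paper: extract the uniform limit $X$, show the associated sequence $(\tilde X_k(t))$ is Cauchy in $\LL^2(\PP,\mat{H})$ via a three-term triangle inequality, take its limit $\tilde X$, and conclude with a three-$\varepsilon$ split choosing $k$ before $N$. Your use of the $\|\cdot\|_2$ triangle inequality to get the clean uniform bound $\|\tilde X_k(t)-\tilde X_j(t)\|_2\le\|X_k-X_j\|_\infty$ is a minor cosmetic improvement over the paper's squared version with constant $3$, but the argument is the same.
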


\begin{proof}
By Proposition \ref{proposition},  $P_{\omega}L\big(\R_{+},\LL^{2}(\PP, \mat{H})\big) $ is a vector space, then it is easy to verify that 
$ || .||_{\infty}$ is a norm on $P_{\omega}L\big(\R_{+},\LL^{2}(\PP, \mat{H})\big) $. 
We only need to show that $P_{\omega}L\big(\R_{+},\LL^{2}(\PP, \mat{H})\big) $ is complete with respect to the norm 
$ || .||_{\infty} $. To this end, assume that  $(X_{n})_{n\geq 0}  \in  P_{\omega}L\big(\R_{+},\LL^{2}(\PP, \mat{H})\big) $
is a Cauchy  sequence with respect to $||.||_{\infty}$ and that $X$ is the pointwise limit of $X_n$ with respect to  $||.||_{2}$ ; i.e. 
\begin{equation}
\label{eq:cauchy1}
\lim_{n\rightarrow +\infty }  \E || X_{n}(t)-X(t)||^{2} = 0
\end{equation}

for each $t \geq 0$. 
Note that this limit $X$  always exists by the completeness of $\LL^{2}(\PP, \mat{H})$ with respect to $||.||_{2}$.\\
Since  $(X_{n})_{n\geq 0} $ is Cauchy with respect to $ ||.||_{\infty} $ the convergence in (\ref{eq:cauchy1}) is uniform for $t\geq 0$. We need to show that
$X \in P_{\omega}L\big(\R_{+},\LL^{2}(\PP, \mat{H})\big)$. 
 First note that $X$ is stochastically  continuous from the uniform convergence of $X_n$ to $X$ with respect to $||.||_{2}$ and the stochastic continuity of  $X_n$. 
 Next we  prove that $X$ is  a square mean $\omega$-periodic limit process. 
 By the definition of $(X_{n})_{n\geq 0} \in  P_{\omega}L\big(\R_{+},\LL^{2}(\PP, \mat{H})\big) $ we have for all $i\geq 0$, : 
 
 \begin{equation}
 \label{eq:cauchy2}
  \lim_{n \rightarrow +\infty } \E || X_{i}(t+n\omega)-\tilde{X}_{i}(t)||^{2} = 0,
 \end{equation}
 for some stochastic process $\tilde{X} _{i} : \R_{+} \rightarrow \LL^{2}(\PP, \mat{H})\big) $. 
 Let us point out  that for each $t \geq 0$, the sequence 
 $(\tilde{X}_{i}(t))_{i\geq 0} $ is a Cauchy sequence in $\LL^{2}(\PP, \mat{H})\big)$.  Indeed, we have

  \begin{align*}
\E \left|\left| \tilde{X}_{i}(t)- \tilde{X}_{k}(t)\right|\right|^{2}
 & \leq 3 \E \left|\left| \tilde{X}_{i}(t)- X_{i}(t+n\omega)\right|\right|^{2}  \\
 & + 3 \E \left|\left| X_{i}(t+n\omega)- X_{k}(t+n\omega)\right|\right|^{2}\\
 & + 3 \E \left|\left| X_{k}(t+n\omega)- \tilde{X}_{k}(t)\right|\right|^{2}.\\
\end{align*}
By  (\ref{eq:cauchy2})  and the fact that the sequence   $(X_{n})_{n\geq 0}  $ is Cauchy, we get that the sequence  $(\tilde{X}_{i}(t))_{i\geq 0} $  is Cauchy.\\

Using the completness of the space $\LL^{2}(\PP, \mat{H})\big)$, we denote by $\tilde{X}$ the pointwise limit of $(\tilde{X}_{i}(t))_{i\geq 0} $ such that 

\begin{equation}
\label{eq:cauchy3}
\lim_{i\rightarrow +\infty }  \E || \tilde{X}_{i}(t)-\tilde{X}(t)||^{2} = 0.
\end{equation}

Let us prove now that 
$$   \lim_{n \rightarrow +\infty } \E || X(t+n\omega)-\tilde{X}(t)||^{2} = 0,$$
for each $t \geq 0$.  Indeed for each $i \geq 0 $, we have 

  \begin{align*}
\E \left|\left| X(t+n\omega)-  \tilde{X}(t)\right|\right|^{2}
 & \leq 3  \E \left|\left| X(t+n\omega)- X_{i}(t+n\omega) \right|\right|^{2}\\
 & + 3 \E \left|\left| X_{i}(t+n\omega)-   \tilde{X}_{i}(t)  \right|\right|^{2}\\
 & + 3 \E \left|\left|  \tilde{X}_{i}(t) - \tilde{X}(t)\right|\right|^{2}.\\
\end{align*}
By  (\ref{eq:cauchy1}) , (\ref{eq:cauchy2})  and (\ref{eq:cauchy3})   we get 

$$   \lim_{n \rightarrow +\infty } \E || X(t+n\omega)-\tilde{X}(t)||^{2} = 0.      $$
The proof is completed.
\end{proof}

\begin{definition}
A  continuous and bounded process $X: \R_{+} \rightarrow  \LL^{2}(\PP, \mat{H})$ is said to be square
mean asymptotically $\omega$-periodic if $ X= Y+Z$ where $Y$ and $Z$ are continuous bounded processes 
such that 
$$ \E ||Y(t+\omega)- Y(t)||^{2} = 0 \quad \textrm{and} \quad \lim_{t \to \infty} \E ||Z(t)||^{2}=0.   $$
\end{definition} 
We write $Y \in P_{\omega}\big(\R_{+},\LL^{2}(\PP, \mat{H})\big) $, $ Z \in C_{0}\big(\R_{+},\LL^{2}(\PP, \mat{H})\big) $ 
and we denote the space of all square mean asymptotically $\omega$-periodic stochastic  process 
 $X: \R_{+} \rightarrow  \LL^{2}(\PP, \mat{H})$ by $ AP_{\omega}\big(\R_{+},\LL^{2}(\PP, \mat{H})\big)$. 

\begin{lemma}
The space $AP_{\omega}\big(\R_{+},\LL^{2}(\PP, \mat{H})\big) $ is a closed subspace of \\
$P_{\omega}L\big(\R_{+},\LL^{2}(\PP, \mat{H})\big) $ 
\end{lemma}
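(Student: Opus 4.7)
First I would verify the inclusion $AP_{\omega}\big(\R_+,\LL^2(\PP,\mat H)\big)\subset P_{\omega}L\big(\R_+,\LL^2(\PP,\mat H)\big)$, without which the statement of closedness would be vacuous. Given $X=Y+Z$ with $\E\|Y(t+\omega)-Y(t)\|^2=0$ and $Z\in C_0$, the Remark following Proposition \ref{proposition} gives $\E\|Y(t+n\omega)-Y(t)\|^2=0$ for every $n\geq 1$, while $Z\in C_0$ yields $\E\|Z(t+n\omega)\|^2\to 0$. The bound
\[
\E\|X(t+n\omega)-Y(t)\|^2\leq 2\E\|Y(t+n\omega)-Y(t)\|^2+2\E\|Z(t+n\omega)\|^2
\]
then shows $X\in P_{\omega}L$ with periodic limit process $\tilde X=Y$ (in $\LL^2$).

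For the closedness, let $(X_n)\subset AP_{\omega}$ converge to $X\in P_{\omega}L$ in $\|\cdot\|_\infty$ and fix decompositions $X_n=Y_n+Z_n$. The heart of the argument is to show that $(Y_n)$ is itself Cauchy for $\|\cdot\|_\infty$. For each $t\geq 0$ and any $k\geq 1$, the triangle inequality gives
\[
\E\|Y_n(t)-Y_m(t)\|^2\leq 3\E\|Y_n(t)-X_n(t+k\omega)\|^2+3\E\|X_n(t+k\omega)-X_m(t+k\omega)\|^2+3\E\|X_m(t+k\omega)-Y_m(t)\|^2.
\]
Writing $X_n(t+k\omega)-Y_n(t)=\big(Y_n(t+k\omega)-Y_n(t)\big)+Z_n(t+k\omega)$ and using iterated periodicity of $Y_n$, the first term is bounded above by $6\,\E\|Z_n(t+k\omega)\|^2$, which tends to $0$ as $k\to\infty$; the third term is handled symmetrically, and the middle term is bounded by $3\|X_n-X_m\|_\infty^2$ uniformly in $t$. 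Passing $k\to\infty$ and then taking $\sup_t$ yields $\|Y_n-Y_m\|_\infty\leq \sqrt{3}\,\|X_n-X_m\|_\infty$.

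Let $Y$ be the uniform limit of $(Y_n)$ in $\|\cdot\|_\infty$. Since the $Y_n$ are continuous and uniformly bounded, so is $Y$, and passing to the limit in $\E\|Y_n(t+\omega)-Y_n(t)\|^2=0$ via a three-term inequality gives $\E\|Y(t+\omega)-Y(t)\|^2=0$. Set $Z:=X-Y$; then $Z_n=X_n-Y_n\to Z$ in $\|\cdot\|_\infty$, and since each $Z_n\in C_0$ and $C_0\big(\R_+,\LL^2(\PP,\mat H)\big)$ is closed under $\|\cdot\|_\infty$-convergence (standard $\varep/2$ argument), one gets $Z\in C_0$. Therefore $X=Y+Z\in AP_{\omega}$, which proves the desired closedness.

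The main obstacle is the Cauchy step for $(Y_n)$: one must recognize that the periodic component $Y_n$ of the decomposition coincides in $\LL^2$ with the $\omega$-periodic limit $\tilde X_n$ of $X_n$, and then leverage the vanishing of $Z_n$ at infinity, together with the $\|\cdot\|_\infty$-control on $X_n-X_m$, to transfer a Cauchy property from $(X_n)$ to $(Y_n)$.
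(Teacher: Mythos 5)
Your proposal is correct and follows the same overall architecture as the paper's proof (first the inclusion $AP_{\omega}\subseteq P_{\omega}L$ via the decomposition $X=Y+Z$, then closedness by extracting limits $Y$ and $Z$ of the components and checking $Y\in P_{\omega}$, $Z\in C_{0}$). The one place where you genuinely diverge is the crucial step of producing the limits $Y$ and $Z$: the paper simply asserts ``if $Y_n$ or $Z_n$ does not converge then $X_n$ will not converge,'' which is not a proof --- it amounts to claiming, without justification, that the projections of $AP_{\omega}=P_{\omega}\oplus C_{0}$ onto its two summands are continuous. Your argument supplies exactly the missing quantitative control: by identifying $Y_n$ with the $\omega$-periodic limit of $X_n$ and writing
\[
\E\|Y_n(t)-Y_m(t)\|^{2}\leq 3\,\E\|Y_n(t)-X_n(t+k\omega)\|^{2}+3\,\E\|X_n(t+k\omega)-X_m(t+k\omega)\|^{2}+3\,\E\|X_m(t+k\omega)-Y_m(t)\|^{2},
\]
then letting $k\to\infty$ (using $\E\|Y_n(t+k\omega)-Y_n(t)\|^{2}=0$ and $Z_n\in C_{0}$), you obtain $\|Y_n-Y_m\|_{\infty}\leq\sqrt{3}\,\|X_n-X_m\|_{\infty}$, i.e.\ the projection is Lipschitz, so $(Y_n)$ is Cauchy and converges. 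The remainder (continuity and boundedness of $Y$, the three-term estimate giving $\E\|Y(t+\omega)-Y(t)\|^{2}=0$, and the closedness of $C_{0}$) matches the paper. In short: same route, but your version repairs the one non-rigorous step in the published argument, at the modest cost of the extra Cauchy computation.
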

\begin{proof}
Note that   $$AP_{\omega}\big(\R_{+},\LL^{2}(\PP, \mat{H})\big) \subseteq  P_{\omega}L\big(\R_{+},\LL^{2}(\PP, \mat{H})\big).$$
Indeed, if  $X\in AP_{\omega}\big(\R_{+},\LL^{2}(\PP, \mat{H})\big)  $ then we have  for all $n\geq 1$,  $t\geq 0$, 
\begin{align*}
& \E \left|\left| X(t+n\omega)- Y(t)\right|\right|^{2} \\ 
& \leq  2\E \left|\left| X(t+n\omega)- Y(t+n\omega)\right|\right|^{2} +
  2\E \left|\left| Y(t+n\omega)- Y(t)\right|\right|^{2}\\& \leq  2\E \left|\left| Z(t+n\omega)\right|\right|^{2} +  2\E \left|\left| Y(t+n\omega)- Y(t)\right|\right|^{2}\\
  & =  2\E \left|\left| Z(t+n\omega)\right|\right|^{2}\\
  \end{align*}
  so that $$ \lim_{n\to \infty} \E \left|\left| X(t+n\omega)- Y(t)\right|\right|^{2}=0$$ for all $t\geq 0$.\\
 
 Now let's  show that  $AP_{\omega}\big(\R_{+},\LL^{2}(\PP, \mat{H})\big) $ is a closed space. \\
  Let $X \in  \overline{AP_{\omega}\big(\R_{+},\LL^{2}(\PP, \mat{H})\big)}$; there exist $X_{n} \in  AP_{\omega}\big(\R_{+},\LL^{2}(\PP, \mat{H})\big) $ such that $\lim_{n\to \infty} X_{n} = X $. Since $X_{n} \in  AP_{\omega}\big(\R_{+},\LL^{2}(\PP, \mat{H})\big) $,
  we have $X_{n} = Y_{n} + Z_{n}$ where $Y_{n} \in P_{\omega}\big(\R_{+},\LL^{2}(\PP, \mat{H})\big) $ and $ Z_{n} \in C_{0}\big(\R_{+},\LL^{2}(\PP, \mat{H})\big) $.\\
  If $Y_n$ or $Z_n$ does not converge then $X_n$ will not converge. Thus, there exist $Y$ and $Z$ such that 
   $\lim_{n\to \infty} Y_{n} = Y $ and  $\lim_{n\to \infty} Z_{n} = Z $. We have $X=Y+Z$. \\
  In the sequel we'll show that $Y \in  P_{\omega}\big(\R_{+},\LL^{2}(\PP, \mat{H})\big) $ and $ Z \in C_{0}\big(\R_{+},\LL^{2}(\PP, \mat{H})\big) $.  
  Firstly, we have 
  \begin{align*}
 & \E ||Y(t+\omega)- Y(t)||^{2} \\  &  \leq  3\E ||Y(t+\omega)- Y_{n}(t+\omega)||^{2} + 3 \E ||Y_{n}(t+\omega)- Y_{n}(t)||^{2}  \\
 & \quad    + 3\E ||Y_{n}(t)- Y(t)||^{2}\\
  & = 3\E ||Y(t+\omega)- Y_{n}(t+\omega)||^{2}   + 3\E ||Y_{n}(t)- Y(t)||^{2}\\
 \end{align*}    
 For $N$ sufficiently large, if $n \geq  N$ then 
 $$  \E ||Y(t+\omega)- Y_{n}(t+\omega)||^{2}  \leq \epsilon/6$$
 $$ \E ||Y_{n}(t)- Y(t)||^{2} \leq \epsilon/6.$$
 Therefore for $n>N$, 
 $$ E ||Y(t+\omega)- Y(t)||^{2} \leq \epsilon.$$
 Thus $$ E ||Y(t+\omega)- Y(t)||^{2} = 0,$$
 so that $Y \in  P_{\omega}\big(\R_{+},\LL^{2}(\PP, \mat{H})\big). $\\
 
 On the other hand 

   \begin{align*}
 \E ||Z(t)||^{2} &  \leq  2\E ||Z(t)- Z_{n}(t)||^{2} + 2 \E ||Z_{n}(t)||^{2}. \\
 \end{align*}  
 For all $\epsilon >0$, $\exists T_{\epsilon}>0$, $t > T_{\epsilon}$  $\Rightarrow $  
 $$ \E ||Z_{n}(t)||^{2} \leq \epsilon/4.$$
 There exists $N \in \mathbb{N}$, $n >N$ $\Rightarrow$ 
 $$E ||Z(t)- Z_{n}(t)||^{2} \leq \epsilon/4.$$
 Therefore for all  $n>N$ and $t > T_{\epsilon}$  we have 
   \begin{align*}
 \E ||Z(t)||^{2} &  \leq  \epsilon/2 +  \epsilon/2 = \epsilon,  \\
 \end{align*}  
 so that $ Z \in C_{0}\big(\R_{+},\LL^{2}(\PP, \mat{H})\big).$
\end{proof}

From the above Lemma, we have the following conclusion by the fundamental knowledge of functional analysis.

\begin{theorem}
The space $AP_{\omega}\big(\R_{+},\LL^{2}(\PP, \mat{H})\big) $ is a
 Banach space equipped with the  norm $|| . ||_{\infty} $.
\end{theorem}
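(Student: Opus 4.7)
The plan is to derive the theorem as an immediate consequence of the two results just established in this section: the earlier theorem showing that $P_{\omega}L\big(\R_{+},\LL^{2}(\PP,\mat{H})\big)$ is a Banach space under $\|\cdot\|_{\infty}$, and the preceding lemma identifying $AP_{\omega}\big(\R_{+},\LL^{2}(\PP,\mat{H})\big)$ as a closed subset of $P_{\omega}L\big(\R_{+},\LL^{2}(\PP,\mat{H})\big)$. Combined with the elementary functional-analytic principle that a closed vector subspace of a Banach space is itself a Banach space in the inherited norm, these two facts yield the claim in essentially one step.

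Before invoking that principle, I would make the linear-subspace structure explicit. Given $X_1=Y_1+Z_1$ and $X_2=Y_2+Z_2$ in $AP_{\omega}$ with $Y_i\in P_{\omega}$ and $Z_i\in C_0$, and scalars $\alpha,\beta$, one writes $\alpha X_1+\beta X_2=(\alpha Y_1+\beta Y_2)+(\alpha Z_1+\beta Z_2)$. A direct check shows the first summand still satisfies $\E\|Y(t+\omega)-Y(t)\|^{2}=0$, while the second still has $\E\|Z(t)\|^{2}\to 0$ as $t\to\infty$. So $AP_{\omega}$ is genuinely a linear subspace of $P_{\omega}L$, not merely a subset, and the closedness statement of the lemma is the closedness of a linear subspace.

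With these pieces in place, the completeness argument contracts to a single observation: any $\|\cdot\|_{\infty}$-Cauchy sequence $(X_n)$ in $AP_{\omega}$ is in particular Cauchy in $P_{\omega}L$, hence converges there to some $X$ by the earlier theorem; the closedness supplied by the lemma then forces $X\in AP_{\omega}$. I do not anticipate any real obstacle here — the substantive work has already been done, first in the completeness proof for $P_{\omega}L$ and then in the three-term splitting of the preceding lemma, which was the step that actually had to extract separate limits for the $Y_n$ and $Z_n$ components of an approximating sequence.
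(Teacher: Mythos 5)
Your proposal is correct and matches the paper's argument: the paper likewise deduces the theorem directly from the preceding lemma (closedness of $AP_{\omega}$ in $P_{\omega}L$) together with the completeness of $P_{\omega}L$, citing ``the fundamental knowledge of functional analysis.'' Your extra verification of the linear-subspace structure is a reasonable amplification but does not change the route.
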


The following result provides some interesting properties.

\begin{theorem}
\label{criteria}
Let $X$ be a  continuous and bounded stochastic process and $\omega >0$. Then the following statements are equivalent 
\begin{itemize}
\item[(i)] $ X \in AP_{\omega}\big(\R_{+},\LL^{2}(\PP, \mat{H})\big) $
\item[(ii)]  We have $$ \lim_{n\to \infty} \E \left|\left| X(t+n\omega)- Y(t)\right|\right|^{2}=0$$
uniformly on $t \in \R_{+}$ for some stochastic process $Y: \R_+  \rightarrow \LL^{2}(\PP, \mat{H})$.
\item[(iii)]  We have  $$ \lim_{n\to \infty} \E \left|\left| X(t+n\omega)- Y(t)\right|\right|^{2}=0$$
uniformly on compact  subsets of $\R_+$ for some stochastic process\\
 $Y: \R_+  \rightarrow \LL^{2}(\PP, \mat{H})$.
\item[(iv)] We also have 
$$ \lim_{n\to \infty} \E \left|\left| X(t+n\omega)- Y(t)\right|\right|^{2}=0$$
uniformly on $[0,\omega]$ for some stochastic process $Y: \R_+  \rightarrow \LL^{2}(\PP, \mat{H})$.
\end{itemize}
\end{theorem}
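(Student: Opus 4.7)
The plan is to prove the cycle $(i) \Rightarrow (ii) \Rightarrow (iii) \Rightarrow (iv) \Rightarrow (i)$. The middle implications $(ii) \Rightarrow (iii) \Rightarrow (iv)$ are immediate: uniform convergence on $\R_{+}$ forces uniform convergence on every compact subset, and $[0,\omega]$ is itself one such compact subset. The real content lies in $(i) \Rightarrow (ii)$ and, especially, in $(iv) \Rightarrow (i)$.

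For $(i) \Rightarrow (ii)$: decomposing $X = Y + Z$ with $\E\|Y(t+\omega)-Y(t)\|^{2}=0$ and $\E\|Z(s)\|^{2}\to 0$ as $s\to \infty$, I would iterate the $\LL^{2}$-periodicity of $Y$ (as in the remark following Proposition~\ref{proposition}) to obtain $\E\|Y(t+n\omega)-Y(t)\|^{2}=0$ for every $n \geq 1$. The triangle inequality in $\LL^{2}$ then gives $\E\|X(t+n\omega)-Y(t)\|^{2}\leq 2\E\|Z(t+n\omega)\|^{2}$, and the right-hand side tends to $0$ uniformly in $t \geq 0$ because $t+n\omega \geq n\omega \to \infty$.

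For $(iv) \Rightarrow (i)$, which is the crux: uniform $\LL^{2}$-convergence on $[0,\omega]$ of the continuous processes $X(\cdot + n\omega)$ to $Y$ forces $Y|_{[0,\omega]}$ to be continuous. Applying the uniform convergence at $t=0$ after a shift $n \mapsto n+1$ gives $X((n+1)\omega) \to Y(0)$, while applying it at $t=\omega$ gives $X((n+1)\omega) \to Y(\omega)$; uniqueness of $\LL^{2}$-limits therefore yields $Y(0)=Y(\omega)$ in $\LL^{2}$. I would then define $\widehat{Y}:\R_{+} \to \LL^{2}(\PP,\mat{H})$ as the $\omega$-periodic extension of $Y|_{[0,\omega]}$; the matching boundary values guarantee continuity on $\R_{+}$, and boundedness is inherited from the uniform bound on $X$ together with uniform $\LL^{2}$-convergence. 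Setting $Z := X - \widehat{Y}$ produces a continuous bounded remainder, and to establish $\E\|Z(t)\|^{2}\to 0$ I would fix $\varep > 0$, use the uniform convergence to pick $N$ with $\E\|X(s+n\omega)-Y(s)\|^{2} < \varep$ for all $n \geq N$ and $s \in [0,\omega]$, and then for any $t \geq N\omega$ write $t = s + n\omega$ with $n \geq N$ and $s \in [0,\omega)$, so that by the periodicity of $\widehat{Y}$, $\E\|Z(t)\|^{2} = \E\|X(s+n\omega)-Y(s)\|^{2} < \varep$.

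The expected main obstacle is exactly the extension step in $(iv) \Rightarrow (i)$: upgrading a process that is only constrained on $[0,\omega]$ to a continuous $\omega$-periodic process on all of $\R_{+}$. The whole difficulty collapses to the identity $Y(0)=Y(\omega)$, which is forced by a well-chosen index shift in the uniform convergence; once continuity and periodicity of $\widehat{Y}$ are secured, the decay of $Z$ at infinity falls out cleanly from the decomposition $t = s + n\omega$ combined with the uniformity on $[0,\omega]$.
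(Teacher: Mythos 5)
Your proof is correct and follows essentially the same route as the paper: the trivial chain $(ii)\Rightarrow(iii)\Rightarrow(iv)$, the decomposition $X=Y+Z$ with iterated $\LL^{2}$-periodicity for $(i)\Rightarrow(ii)$, and for $(iv)\Rightarrow(i)$ the continuity of $Y$ on $[0,\omega]$ by uniform convergence followed by the decomposition $t=s+k\omega$ to show $X-Y\in C_{0}$. Your only genuine refinement is the explicit verification that $Y(0)=Y(\omega)$ via the index shift and the explicit periodic extension $\widehat{Y}$, a point the paper glosses over by tacitly treating $Y$ as already periodic on all of $\R_{+}$; this makes your version slightly more careful but does not change the argument.
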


\begin{proof}
Clearly Statement (ii)  implies (iii) and (iii) implies (iv). Suppose that (i) holds and let $X=Y+Z$ where 
$Y \in P_{\omega}\big(\R_{+},\LL^{2}(\PP, \mat{H})\big) $ and $ Z \in C_{0}\big(\R_{+},\LL^{2}(\PP, \mat{H})\big) $.
Now for $n \in \N$,
\begin{equation}
\label{eq1}
 X(t+n\omega) = Y(t+n\omega) + Z(t+n\omega) \quad (\star).
\end{equation}

Let $\epsilon >0$. Since $ Z \in C_{0}\big(\R_{+},\LL^{2}(\PP, \mat{H})\big) $ there exist $N_{1}$ such that \\
$\E||Z(t+n\omega)||^{2} < \epsilon/2$ whenever $n \geq N_{1}$ for every $t\in \R_+$. Then using (\ref{eq1}), we obtain 

\begin{align*}
\E|| X(t+n\omega)-Y(t)||^{2} 
 & \leq 2 \E|| X(t+n\omega)-Y(t+n\omega)||^{2} \\
 & +2 \E|| Y(t+n\omega)-Y(t)||^{2}  \\
 &  =  2  \E||Z(t+n\omega)||^{2} \\
 &\leq \epsilon,
\end{align*}
whenever $n \geq N_{1}$ for every $t\in \R_+$. This   shows that 
 $$ \lim_{n\to \infty} \E \left|\left| X(t+n\omega)- Y(t)\right|\right|^{2}=0$$
uniformly on $t \geq 0$ for some stochastic process $Y: \R_+  \rightarrow \LL^{2}(\PP, \mat{H})$.\\
Hence (i) implies (ii).\\

Finally, suppose that (iv) holds. It is clear that $Y$ is bounded on $\R_+$ and 
 $$\E \left|\left| Y(t+\omega)- Y(t)\right|\right|^{2}=0$$ for each $t\geq 0$ like in Proposition \ref{proposition} , part (c).\\

Thus, to show the continuity of $Y$ on $\R_+$ we only need to prove that $Y$ is continuous on $[0,\omega]$. 
Now, take any fixed $t_{0} \in [0,\omega]$ and let $t\in [0,\omega]$. For each $n \in \N$, we have 
 
\begin{align}
\label{eq2}
\E \left|\left| Y(t)- Y(t_{0})\right|\right|^{2}
 & \leq 3\E \left|\left| Y(t)- X(t+n\omega)\right|\right|^{2} \\
 & + 3  \E \left|\left| X(t+n\omega)- X(t_{0}+n\omega)\right|\right|^{2}\\
 &  + 3 \E \left|\left| X(t_{0}+n\omega)-Y(t_{0})\right|\right|^{2} 
\end{align}

Let $\epsilon >0$. By Assumption in (iv), we conclude that there exists a positive integer $N_2$ such that \begin{equation}
\label{eq:1}
\E \left|\left| Y(t)- X(t+n\omega) \right|\right|^{2} < \epsilon/9
\end{equation}
for $t \in [0,\omega]$ and $n\geq N_{2}$.\\
On the other hand, since $X(t+N_{2}\omega)$ is in $C_{b}\big(\R_{+},\LL^{2}(\PP, \mat{H})\big) $ then there exists $\delta >0$ such that 

 \begin{equation}
\label{eq:2}
\E \left|\left| X(t+N_{2}\omega)- X(t_{0}+ N_{2}\omega) \right|\right|^{2} < \epsilon/9
\end{equation}
for $|t-t_{0}| < \delta$.\\
Using (\ref{eq2}) to  (\ref{eq:2}) we conclude that 
 $$ \E \left|\left| Y(t)- Y(t_{0}) \right|\right|^{2} < \epsilon \; \textrm{when} \; |t-t_{0}| < \delta,$$ 
 which show that $Y$ is continuous on $[0,\omega]$. Hence $Y \in P_{\omega}\big(\R_{+},\LL^{2}(\PP, \mat{H})\big) $. \\

Next, we will show that $X-Y \in C_{0}\big(\R_{+},\LL^{2}(\PP, \mat{H})\big) $. Suppose that $\epsilon >0$ and there exists a positive integer $N_3$  such that
 $$ \E \left|\left| X(t+n\omega)- Y(t) \right|\right|^{2} < \epsilon$$
  when $n \geq N_3$ uniformly for $t\in [0,\omega]$ by Assumption in (iv) again.\\
Thus, for $n=N_{3}+k$, $k=0,1,2,...,$ we conclude that 
 $$ \E \left|\left| X(t+(N_{3}+k)\omega)- Y(t) \right|\right|^{2} < \epsilon$$
uniformly for $t\in [0,\omega]$. Moreover, if we denote $t'=t+k\omega$, where $t'\in |k\omega, (k+1)\omega]$, $t \in [0,\omega]$ and $k=0,1,2,...,$ then we obtain 

\begin{align*}
&\E \left|\left| X(t'+N_{3}\omega)- Y(t'+N_{3}\omega)\right|\right|^{2}\\ 
 & = \E \left|\left| X(t+ (N_{3}+k)\omega)- Y(t+ (N_{3}+k)\omega)\right|\right|^{2} \\
 & = \E \left|\left| X(t+ (N_{3}+k)\omega)- Y(t)\right|\right|^{2} < \epsilon\\
\end{align*}
for $t' \in [k\omega, (k+1)\omega]$, $k=0,1,2,...$.\\
 That is  $$ \E \left|\left| X(t)- Y(t) \right|\right|^{2} < \epsilon \quad \quad (t\geq N_{3}\omega)$$
which show that $ X-Y \in C_{0}\big(\R_{+},\LL^{2}(\PP, \mat{H})\big) $. Hence $X \in \in AP_{\omega}\big(\R_{+},\LL^{2}(\PP, \mat{H})\big) $\\  and (iv) implies (i).
\end{proof}
The following generalizes the Definition 2.3.\\
\begin{definition}
A continuous bounded  process $f : \R_{+} \times \LL^{2}(\PP, \mat{H}) \rightarrow  \LL^{2}(\PP, \mat{H})$ is called square mean $\omega$ periodic limit in 
$t \in \R_{+}$ uniformly for $X$ in bounded sets of  $\LL^{2}(\PP, \mat{H})$ if for every bounded subsets $K$ of $\LL^{2}(\PP, \mat{H})$, 
$ \{ f(t,X) : t \in \R_{+}; X \in K    \} $ is bounded and 
$$  \lim_{n\to  +\infty } \E \left|\left| f(t+n\omega,X)- \tilde{f}(t,X) \right|\right|^{2} = 0     $$
is well defined when $n \in \mathbb{N}$ for each $t \geq 0$ and for some process $ \tilde{f}  : \R_{+} \times \LL^{2}(\PP, \mat{H}) \rightarrow  \LL^{2}(\PP, \mat{H})$ .
\end{definition}

We have the following composition result:

\begin{theorem}
\label{composition}
Assume that $f: \R_{+} \times \LL^{2}(\PP, \mat{H}) \rightarrow  \LL^{2}(\PP, \mat{H})$ is  a square mean  $\omega$-periodic limit process 
uniformly for $Y \in \LL^{2}(\PP, \mat{H})$  in  bounded sets of $\LL^{2}(\PP, \mat{H})$ and satisfies the Lipschitz condition, that is, there exists  constant  $L > 0$  such that $$   \E ||f(t,Y)-f(t,Z)||^{2} \leq L\, \E||Y-Z||^{2} \quad \forall t \geq 0, \, \forall  \, Y,Z \in \LL^{2}(\PP, \mat{H}). $$
 Let $X :  \R_{+}  \rightarrow  \LL^{2}(\PP, \mat{H})$ be a square mean  $\omega$-periodic limit process.  Then the process $ F(t) = (f(t,X(t)))_{t\geq 0}$ is  a square mean  $\omega$-periodic limit process. 
\end{theorem}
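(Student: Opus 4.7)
The plan is to add and subtract $f(t+n\omega,\tilde X(t))$ inside the $L^{2}$ norm, so as to decouple the dependence of $F(t+n\omega)=f(t+n\omega,X(t+n\omega))$ on the time argument (which will be handled by the $\omega$-periodic limit property of $f$) from the dependence on the space argument (handled by the Lipschitz condition and the $\omega$-periodic limit property of $X$). The natural candidate for the limit process is $\tilde F(t):=\tilde f(t,\tilde X(t))$, where $\tilde X$ and $\tilde f$ are supplied by the $\omega$-periodic limit hypotheses on $X$ and on $f$ respectively.

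Using the elementary inequality $\|a+b\|^{2}\le 2\|a\|^{2}+2\|b\|^{2}$, I would write
\[
\E\|F(t+n\omega)-\tilde F(t)\|^{2}\le 2\,\E\|f(t+n\omega,X(t+n\omega))-f(t+n\omega,\tilde X(t))\|^{2}+2\,\E\|f(t+n\omega,\tilde X(t))-\tilde f(t,\tilde X(t))\|^{2}.
\]
The first term is controlled by the Lipschitz hypothesis as $2L\,\E\|X(t+n\omega)-\tilde X(t)\|^{2}$, which tends to $0$ as $n\to\infty$ because $X$ is an $\omega$-periodic limit process. The second term tends to $0$ by applying the definition of ``$f$ is square mean $\omega$-periodic limit'' to the \emph{fixed} element $\tilde X(t)\in\LL^{2}(\PP,\mat H)$ (this pointwise-in-the-space-variable convergence is precisely what the definition of $f$ provides for each $t\ge 0$).

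It then remains to verify that $F$ is itself continuous and bounded on $\R_{+}$, which is what the definition of square mean $\omega$-periodic limit process requires. Boundedness of $F$ follows from the hypothesis that $\{f(t,Y):t\ge 0,\,Y\in K\}$ is bounded for every bounded $K\sset\LL^{2}(\PP,\mat H)$, applied to the bounded range of $X$. Continuity of $F$ follows from the continuity of $f$ together with the Lipschitz bound in the second variable: for $t_{k}\to s$, split
\[
\E\|F(t_{k})-F(s)\|^{2}\le 2\,\E\|f(t_{k},X(t_{k}))-f(t_{k},X(s))\|^{2}+2\,\E\|f(t_{k},X(s))-f(s,X(s))\|^{2},
\]
where the first term is bounded by $2L\,\E\|X(t_{k})-X(s)\|^{2}\to 0$ by stochastic continuity of $X$, and the second term vanishes by continuity of $f$ at the fixed point $X(s)$.

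I do not anticipate a serious obstacle: the argument is a standard triangle-inequality composition. The only point requiring mild care is to note that $\tilde X(t)$ lies in a bounded subset of $\LL^{2}(\PP,\mat H)$, which is guaranteed by Proposition \ref{proposition}(d), so that evaluating $f$ at $\tilde X(t)$ is legitimate and the boundedness of $\{f(t,Y)\}$ on bounded sets can be invoked when needed.
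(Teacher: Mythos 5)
Your proposal is correct and follows essentially the same route as the paper: the same candidate limit $\tilde F(t)=\tilde f(t,\tilde X(t))$, the same insertion of $f(t+n\omega,\tilde X(t))$, with the first term killed by the Lipschitz bound and the $\omega$-periodic limit property of $X$, and the second by the $\omega$-periodic limit property of $f$ at the fixed point $\tilde X(t)$ (legitimate since Proposition \ref{proposition}(d) puts $X(t),\tilde X(t)$ in a common bounded set). Your explicit verification of the continuity of $F$ is a small completeness bonus that the paper's proof omits.
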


\begin{proof}
Since $X$ is a square mean $\omega$-periodic limit process we have : 
\begin{equation}
\label{eq:ref1}
   \lim_{n\to  +\infty } \E \left|\left| X(t+n\omega)- \tilde{X}(t)  \right|\right|^{2} = 0    
   \end{equation}
for some stochastic process $\tilde{X} :  \R_{+}  \rightarrow  \LL^{2}(\PP, \mat{H})$.\\

By using  Proposition 2.1 (4), we can choose a bounded subset $K$ of  $\LL^{2}(\PP, \mat{H})$ such that $ X(t), \tilde{X}(t)  \in K$ for all $t\geq 0$. Then 
$F(t)$ is bounded.\\
On the other hand we have :  
\begin{equation}
\label{eq:ref2}
   \lim_{n\to  +\infty } \E \left|\left| f(t+n\omega,X)-  \tilde{f}(t,X)  \right|\right|^{2} = 0, 
   \end{equation}
for each $t\geq 0$ and each $X\in K$.\\
Let us consider the process $\tilde{F} : \R_{+}  \rightarrow  \LL^{2}(\PP, \mat{H})$ defined by $\tilde{F} (t) = \tilde{f}(t,\tilde{X}(t))$.\\ Note that 
\begin{align*}
\E \left|\left| F(t+n\omega)-  \tilde{F} (t)  \right|\right|^{2}
 & = \E \left|\left| f(t+n\omega, \tilde{X}(t+n\omega))-  \tilde{f}(t,\tilde{X}(t))  \right|\right|^{2}\\
 &  \leq  2 \E \left|\left| f(t+n\omega,X(t+n\omega))- f(t+n\omega,\tilde{X}(t))   \right|\right|^{2} \\
 &  + 2 \E \left|\left| f(t+n\omega,\tilde{X}(t))- \tilde{f}(t,\tilde{X}(t))   \right|\right|^{2} \\
 & \leq 2L \, \E \left|\left| X(t+n\omega)- \tilde{X}(t)   \right|\right|^{2} \\
 & + 2  \E \left|\left| f(t+n\omega,\tilde{X}(t))- \tilde{f}(t,\tilde{X}(t))   \right|\right|^{2} \\
\end{align*}

We deduce from (\ref{eq:ref1}) and (\ref{eq:ref2} ) that 
$$ \lim_{n\to  +\infty }   \E \left|\left| F(t+n\omega)-  \tilde{F} (t)  \right|\right|^{2} = 0 $$
is well defined for $\tilde{F}(t) = \tilde{f}(t,\tilde{X}(t))$. 

\end{proof}

Now, let us end this section with the following property of a Brownian motion. 

\begin{pro}[Weak Markov Property]
Let $B = (B(s))_{s\geq 0} $ be a two-sided  standard one-dimensional Brownian motion  and set for $h \in \mathbb{R}$, $$ \tilde{B}^{h}(u) = B(u+h) - B(h), u \in \mathbb{R}.$$
 Then the process $\tilde{B}^{h}$ is a two-sided  Brownian motion indépendent of $ \{ B(s) : s \leq h  \}$.  In others words, 
 $B(u+h)$  has the same law as $\tilde{B}^{h}(u) + B(h)$. 

\end{pro}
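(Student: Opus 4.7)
The plan is to verify directly that $\tilde{B}^h$ satisfies the defining properties of a two-sided standard Brownian motion and is independent of $\sigma\{B(s):s\leq h\}$; this is the classical simple Markov property, so the proof is essentially a check. First I would observe that $\tilde{B}^h(0) = B(h) - B(h) = 0$, that sample-path continuity is inherited from $B$, and that for any real $u,v$,
$$\tilde{B}^h(u) - \tilde{B}^h(v) = B(u+h) - B(v+h).$$
Hence increments of $\tilde{B}^h$ are shifted increments of $B$, so stationarity and independence of increments, together with the Gaussian law $\mathcal{N}(0,|u-v|)$, transfer at once from $B$ to $\tilde{B}^h$. This already shows that $\tilde{B}^h$ is a standard Brownian motion.

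For the independence from the past I would consider the joint law of a finite vector
$$\big(\tilde{B}^h(u_1),\dots,\tilde{B}^h(u_k);\,B(s_1),\dots,B(s_\ell)\big),$$
with $s_1,\dots,s_\ell\le h$ and $u_1,\dots,u_k\geq 0$. Writing each $\tilde{B}^h(u_j)$ as a linear combination of increments of $B$ over subintervals of $[h,\infty)$ and each $B(s_i)=B(s_i)-B(0)$ as an increment over a subinterval of $(-\infty,h]$, the two blocks are built from disjoint $B$-increments. They are therefore jointly Gaussian with vanishing cross-covariance, hence independent. A standard $\pi$--$\lambda$ (monotone class) argument promotes this finite-dimensional independence to independence of the full $\sigma$-algebras generated by $\tilde{B}^h$ and by $\{B(s):s\leq h\}$. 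The ``same law'' restatement $B(u+h)\stackrel{d}{=} \tilde{B}^h(u)+B(h)$ is then pointwise true by definition and requires nothing further.

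The main obstacle---more bookkeeping than substance---is the two-sided index set. For $u<0$ the variable $\tilde{B}^h(u)$ clearly need not be independent of $\sigma\{B(s):s\leq h\}$ (for instance $\tilde{B}^h(-h)=-B(h)$), so the independence assertion must be read for $u\geq 0$, which is the form used in the SDE analysis of the next section. Apart from this indexing care, the proof amounts to reading off properties of shifted Brownian increments from the known Gaussian structure of $B$.
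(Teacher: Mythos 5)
Your proposal is correct and is the standard verification; the paper itself offers no proof of this proposition, stating it as a classical fact, so there is nothing to compare it against. Your check of the Brownian axioms for $\tilde{B}^{h}$ via shifted increments and the promotion of finite-dimensional independence to the full $\sigma$-algebras by a monotone class argument is exactly the textbook route. Your remark that the independence assertion can only hold for $u\geq 0$ (since, e.g., $\tilde{B}^{h}(-h)=-B(h)$ is measurable with respect to $\sigma\{B(s):s\leq h\}$) is a genuine and necessary correction to the proposition as literally stated, and it is the form actually used in Lemma 3.4, where $\tilde{B}$ is integrated only over $[0,t]$ against the past-measurable integrand.
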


\section{A Stochastic Differential Equation}
In this section, we investigate the existence of the square mean  asymptotically $\omega$-periodic solution
 to the following SDE : 

\begin{equation}
\label{eq: base}
\left\{
\begin{array}{l}
dX(t) =  A X(t)dt + f(t,X(t))dt  + g(t,X(t))dB(t), \quad \quad t \geq 0 \\
X(0) =  c_{0},
\end{array}
\right.
\end{equation}

where $A$ is a  closed linear operator  and 
$$   f: \R_{+} \times \LL^{2}(\PP, \mat{H}) \rightarrow  \LL^{2}(\PP, \mat{H}),    $$

$$  g: \R_{+} \times \LL^{2}(\PP, \mat{H}) \rightarrow  \LL^{2}(\PP, \mat{H})  $$
are   Lipschitz continuous and  bounded, $(B(t))_{t}$ is a two-sided standard one-dimensional Brownian motion 
 with values in $\mat{H}$ and $\cal{F}_t$-adapted and $ c_{0} \in \LL^{2}(\PP, \mat{H}) $.  Recall that $\cal{F}_{t}=\sigma\{ B(u)-B(v) / u, v \leq t \}$.\\

In order to establish our main result, we impose the following conditions.\\

{ \bf (H1)}: $A$ generates an exponentially  stable semigroup  $(T(t))_{t\geq 0}$ in $\LL^{2}(\PP, \mat{H})$, that is,  a linear operator such that  :\\

\begin{itemize}
\item[1.] $T(0)=I$  where $I$ is the identity operator.
\item[2.] $T(t)T(r)=T(t+r)$ for all $t , r \geq 0.$
\item[3.] The map $ t \mapsto T(t)x$ is continuous for every fixed $x \in \LL^{2}(\PP, \mat{H})$.
\item[4.] There exist $M>0$ and $a>0$ such that $||T(t)|| \leq Me^{-at}$ for $t\geq 0.$\\
\end{itemize}

\begin{definition} 
The $\FF_{t}$-progressively measurable process $\{X(t),\; t \geq 0 \} $ is said to be a mild solution of (\ref{eq: base})  if  it satisfies the following stochastic integral equation: 
\begin{align*}
X(t) & = T(t) c_{0} + \int_{0}^{t} T(t-s)f(s,X(s))ds +  \int_{0}^{t} T(t-s)g(s,X(s))dB(s).\\  
\end{align*}
\end{definition}
Now, we'll establish some technical results. 
\begin{lemma}
\label{lemme1}
Let $F$ be a square mean $\omega$-periodic limit process in $  \LL^{2}(\PP, \mat{H})$. 
Under Assumption  (H1)  the sequence of stochastic processes $(X_{n}(t))_{n\geq 1}$, $t\geq 0$, defined by 
$$ X_{n}(t) =   \int_{0}^{n\omega} T(t+s) F(n\omega -s)ds $$
 is a Cauchy sequence in $  \LL^{2}(\PP, \mat{H})$ for all  $t \geq 0$.
\end{lemma}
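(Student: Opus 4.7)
My approach is to prove the stronger statement that $X_{n}(t)$ actually converges in $\LL^{2}(\PP,\mat{H})$ for each fixed $t\ge 0$; the Cauchy property then follows immediately. The main tool is the dominated convergence theorem for Bochner integrals applied to the $\LL^{2}(\PP,\mat{H})$-valued map $s\mapsto T(t+s)\,F(n\omega-s)$. Specifically, I would fix $t\ge 0$, set
$$G_{n}(s) := \mathbf{1}_{[0,n\omega]}(s)\,T(t+s)\,F(n\omega-s), \qquad s\in\R_{+},$$
so that $X_{n}(t)=\int_{0}^{+\infty}G_{n}(s)\,ds$, and observe that the boundedness of $F$ (continuous and bounded by Definition \ref{def3}) combined with the exponential estimate $\|T(t+s)\|\le Me^{-a(t+s)}$ of (H1) provides the uniform majorant $\|G_{n}(s)\|_{2}\le MK\,e^{-a(t+s)}$, which is Lebesgue-integrable on $[0,+\infty)$.

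Next I would identify, for almost every $s$, a norm limit of $G_{n}(s)$ in $\LL^{2}(\PP,\mat{H})$. For $s>0$ not an integer multiple of $\omega$, write $s=k\omega+\tau$ with $k\in\N$ and $\tau\in(0,\omega)$; then for every $n>k$ one has $n\omega-s=(n-k-1)\omega+(\omega-\tau)$, so the square-mean $\omega$-periodic limit property of $F$ applied at the point $\omega-\tau$ gives
$$\lim_{n\to +\infty}\E\|F(n\omega-s)-\tilde{F}(\omega-\tau)\|^{2}=0.$$
Since $T(t+s)$ is a bounded operator on $\LL^{2}(\PP,\mat{H})$, this yields $G_{n}(s)\to T(t+s)\tilde{F}(\omega-\tau)$ in $\LL^{2}(\PP,\mat{H})$-norm.

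The Bochner dominated convergence theorem, valid in the Hilbert space $\LL^{2}(\PP,\mat{H})$, then shows that $X_{n}(t)$ converges in norm, hence is Cauchy. The step I expect to be the real obstacle is that the pointwise-in-$s$ convergence of $G_{n}(s)$ is not uniform in $s$, so one cannot simply pass the limit under the integral; the uniform dominating bound produced by the exponential stability of the semigroup is precisely what compensates. A direct Cauchy-style alternative would estimate $\|X_{m}(t)-X_{n}(t)\|_{2}$ by splitting the integral at some $(n-k)\omega$: the tail part is handled for large $n$ via $L^{2}$-convergence of the $\omega$-shifts of $F$ together with DCT on the compact window $[0,k\omega]$, while the early part is absorbed by the exponential decay of $T$.
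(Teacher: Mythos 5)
Your argument is correct, and it takes a genuinely different route from the paper. The paper proves the Cauchy property directly: it estimates $\E\|X_{n+p}(t)-X_{n}(t)\|^{2}$, kills the tail $\int_{n\omega}^{(n+p)\omega}$ by the exponential decay of $T$, and then splits the remaining integral over $[0,n\omega]$ at a threshold $N\omega$ — on $[0,N\omega]$ (after reindexing the argument of $F$ as an $\omega$-shift) the periodic-limit property plus the scalar dominated convergence theorem makes the integrand small, while on $[N\omega,n\omega]$ the exponential decay together with the uniform bound $\E\|F\|^{2}\le K$ absorbs everything. You instead identify the a.e.\ pointwise limit of the integrand $G_{n}(s)=T(t+s)F(n\omega-s)$ — namely $T(t+s)\tilde{F}(\omega-\tau)$ for $s=k\omega+\tau$, which is exactly the right bookkeeping since $n\omega-s=(\omega-\tau)+(n-k-1)\omega$ — and conclude by a single application of the Bochner dominated convergence theorem under the integrable majorant $M\sqrt{K}e^{-a(t+s)}$ (note the square root: $K$ bounds the second moment, so $\|F\|_{2}\le\sqrt{K}$). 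Your version is shorter, proves the stronger statement that $X_{n}(t)$ converges, and yields an explicit formula for the limit $U(t)$; measurability of the limit integrand is automatic since it is an a.e.\ limit of strongly measurable functions. What the paper's more pedestrian splitting buys is that it transfers almost verbatim to the stochastic-integral analogue (Lemma \ref{lemme3}), where the It\^o isometry replaces Cauchy--Schwarz and one cannot invoke a Bochner-valued dominated convergence theorem directly; with your approach that lemma would require a separate reduction via the isometry to a scalar integral before applying dominated convergence.
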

We  shall denote by $ U=(U(t))_{t\geq 0}$ the limit process  of $(X_{n}(t))_{n\geq 1}$, $t \geq 0$,  in $  \LL^{2}(\PP, \mat{H})$.

\begin{proof}
We have, 
\begin{align*}
& \E || X_{n+p}(t)-X_{n}(t)||^{2} \\
& = \E \left|\left|  \int_{0}^{(n+p)\omega} T(t+s) F((n+p)\omega -s)ds  - \int_{0}^{n\omega} T(t+s) F(n\omega -s)ds    \right|\right|^{2} \\
 & \leq 2  \E \left|\left|  \int_{n\omega}^{(n+p)\omega}T(t+s) F((n+p)\omega -s)ds  \right|\right|^{2} \\
 & +   2  \E \left|\left|  \int_{0}^{n\omega} T(t+s) \big( F((n+p)\omega -s)- F(n\omega -s) \big) ds  \right|\right|^{2} \\
 & =  I_{1}(t,n,p) + I_{2}(t,n,p)\\
\end{align*}
where 
$$   I_{1}(t,n,p)  =  2  \E \left|\left|  \int_{n\omega}^{(n+p)\omega}T(t+s) F((n+p)\omega -s)ds  \right|\right|^{2}    $$
$$  I_{2}(t,n,p) =   2  \E \left|\left|  \int_{0}^{n\omega} T(t+s) \big( F((n+p)\omega -s)- F(n\omega -s) \big) ds  \right|\right|^{2}.  $$

We have 
\begin{align*}
I_{1}(t,n,p) = 2 \E \left|\left|  \int_{n\omega}^{(n+p)\omega} T(t+s) F((n+p)\omega -s)ds \right|\right|^{2} 
\end{align*}

\begin{align*}
  & \leq 2 \E  \Big( \int_{n\omega}^{(n+p)\omega} \left|\left| T(t+s) F((n+p)\omega -s) \right|\right|ds\Big)^{2}\\
 & \leq 2  \E \Big( \int_{n\omega}^{(n+p)\omega} || T(t+s)|| \; || F((n+p)\omega -s)|| ds\Big)^{2}\\
 & \leq  2  \E \Big( \int_{n\omega}^{(n+p)\omega} Me^{-a(t+s)}|| F((n+p)\omega -s)|| ds\Big)^{2}\\
& \leq 2  \int_{n\omega}^{(n+p)\omega} M^{2}e^{-2a(t+s)}ds \int_{n\omega}^{(n+p)\omega} \E|| F((n+p)\omega -s)||^{2} ds\\
& \leq 2M^{2}p \omega K \int_{n\omega}^{+\infty} e^{-2as} ds\\
& \leq \frac{2M^{2}p \omega K}{2a} e^{-2a n \omega}.
 \end{align*}
 
Now we consider the integers  $N_1$ and $N_2$ such that
$$ \frac{M^{2}p\omega K}{a}e^{-2an\omega}  \leq  \epsilon  \quad \forall n\geq N_1$$

$$   \frac{8M^{2}K}{a^{2}} e^{-2an\omega} < \epsilon  \quad \forall n\geq N_2$$

and set  $ N= \max{ (N_{1},N_{2})}$. For $n\geq N$, we have : 

\begin{align*}
I_{2}(t,n,p) = 2 \E \left|\left|  \int_{0}^{n\omega} T(t+s)\big( F((n+p)\omega -s)-F(n\omega -s) \big)ds \right|\right|^{2} 
\end{align*}
\begin{align*}
  & \leq 2 \E \Big(\int_{0}^{n\omega} || T(t+s)||\,|| F((n+p)\omega -s)-F(n\omega -s)|| ds \Big)^{2}\\
 & \leq 4 \E \Big(\int_{0}^{N\omega} || T(t+s)||\,|| F((n+p)\omega -s)-F(n\omega -s)|| ds \Big)^{2}\\
 & + 4 \E \Big(\int_{N\omega}^{n\omega} || T(t+s)||\,|| F((n+p)\omega -s)-F(n\omega -s)|| ds \Big)^{2}\\
 & \leq I_{3}(t,n,p) + I_{4}(t,n,p) \\
\end{align*}
where 
$$   I_{3}(t,n,p) = 4 \E \Big(\int_{0}^{N\omega} || T(t+s)||\,|| F((n+p)\omega -s)-F(n\omega -s)|| ds \Big)^{2}    $$
$$  I_{4}(t,n,p)  = 4 \E \Big(\int_{N\omega}^{n\omega} || T(t+s)||\,|| F((n+p)\omega -s)-F(n\omega -s)|| ds \Big)^{2}.    $$

From $F \in P_{\omega}L\big(\R_{+},\LL^{2}(\PP, \mat{H})\big)$ , there exist a  stochastic process\\
  $\tilde{F} : \R_{+} \rightarrow  \LL^{2}(\PP, \mat{H})$ such that 
 $$\lim_{ k \rightarrow +\infty } \E || F(t+k\omega)-\tilde{F}(t)||^{2} = 0$$
 is well defined for each $t\geq 0$ when $k \in \mathbb{N}$.\\

Now, wee have :
\begin{align*}
  & I_{3}(t,n,p) \\ 
  &  \leq  8  \E \Big(\int_{0}^{N\omega} Me^{-a(t-s+N\omega)}\,|| F((n-N+p)\omega +s)-\tilde{F}(s)|| ds \Big)^{2}\\
 & +  8  \E \Big(\int_{0}^{N\omega} Me^{-a(t-s+N\omega)}\,|| F((n-N)\omega +s)-\tilde{F} (s)|| ds \Big)^{2},\\
\end{align*}

and by Cauchy Schwarz inequality, we obtain 
\begin{align*}
  &  I_{3}(t,n,p) \\
  & \leq 8 \int_{0}^{N\omega} M^{2} e^{-2a(t-s+N\omega)}ds   \int_{0}^{N\omega} \E || F((n-N+p)\omega +s)-\tilde{F} (s)||^{2} ds\\
 & + 8   \int_{0}^{N\omega} M^{2} e^{-2a(t-s+N\omega)}ds  \int_{0}^{N\omega} \E || F((n-N)\omega +s)-\tilde{F} (s)||^{2} ds\\
 & \leq 8M^{2}  \int_{0}^{N\omega} e^{-2a(-s+N\omega)}ds   \int_{0}^{N\omega} \E || F((n-N+p)\omega +s)-\tilde{F} (s)||^{2} ds\\
 & + 8 M^{2}  \int_{0}^{N\omega}  e^{-2a(-s+N\omega)}ds \Big)   \int_{0}^{N\omega} \E || F((n-N)\omega +s)-\tilde{F} (s)||^{2} ds\\
 & \leq  \frac{8M^{2}}{2a} \int_{0}^{N\omega} \E || F((n-N+p)\omega +s)-\tilde{F} (s)||^{2} ds\\
 & + \frac{8M^{2}}{2a} \int_{0}^{N\omega} \E || F((n-N)\omega +s)-\tilde{F} (s)||^{2} ds\\
\end{align*}

Using the fact that   $$  \max \{  \E|| F((n-N+p)\omega + s) - \tilde{F} (s)||^{2}, \E|| F((n-N)\omega + s) -\tilde{F} (s)||^{2} \} \leq 2K ,$$
it follows that 
$$ \lim_{n\to \infty}\, I_{3}(t,n,p)= 0  \quad \textrm{for all }  \; t \geq 0,$$ 
by Lebesgue's dominated convergence theorem. 

Similarly,
\begin{equation*}
I_{4}(t,n,p) = 4 \E \left|\left|  \int_{N\omega}^{n\omega} T(t+s)\big( F((n+p)\omega -s)-F(n\omega -s) \big)ds \right|\right|^{2}
\end{equation*}

\begin{align*}
  & \leq 4 \E \Big(\int_{N\omega}^{n\omega}  || T(t+s)||\,|| F((n+p)\omega -s)-F(n\omega -s)|| ds \Big)^{2}\\
 & \leq 4 \E \Big(\int_{N\omega}^{n\omega} Me^{-a(t+s)} \,|| F((n+p)\omega -s)-F(n\omega -s)|| ds \Big)^{2}\\
 &=  4 \E \Big(\int_{N\omega}^{n\omega} Me^{\frac{-a}{2} (t+s)} \, e^{\frac{-a}{2} (t+s)} \,|| F((n+p)\omega -s)-F(n\omega -s)|| ds \Big)^{2}.\\
\end{align*}
Again, using Cauchy Schwarz inequality, it follows that 

\begin{align*}
I_{4} & \leq   4 \int_{N\omega}^{n\omega} M^{2}e^{-a(t+s)} ds  \int_{N\omega}^{n\omega}  e^{-a(t+s)} \E || F((n+p)\omega -s)-F(n\omega -s)||^{2} ds \\
& \leq  4 \int_{N\omega}^{+\infty} M^{2}e^{-as} ds  \int_{N\omega}^{+\infty}  e^{-as} \E || F((n+p)\omega -s)-F(n\omega -s)||^{2} ds. \\
\end{align*}

Since $\E || F((n+p)\omega -s)-F(n\omega -s)||^{2} \leq 2K$, we obtain 
$$ I_{4} \leq \frac{8M^{2}K}{a^{2}}e^{-2a N \omega}  $$
and hence
$I_{4} \leq \epsilon$.\\
This shows that $(X_{n}(t))_{n\geq 1}$, $t\geq 0$,  is a Cauchy sequence in $\LL^{2}(\PP, \mat{H})$.
\end{proof}

\begin{lemma}
\label{lemme2}
Let $F$ be a square mean $\omega$-periodic limit  process in $  \LL^{2}(\PP, \mat{H})$ \\
such that 
$$ \lim_{n \to +\infty} \E || F(t+n\omega) -  \tilde{F}(t) ||^{2}  = 0$$  for all $t \geq 0$. 
Define $  V(t) = \int_{0}^{t} T(t-s)F(s)ds.$
 Under Assumption (H1) \\ we have   $$ \lim_{n \to +\infty} \E || V(t+n\omega) - V^{*}(t) ||^{2}  = 0$$  
 uniformly on $t \geq 0$ where 
 $$ V^{*}(t)  = U(t) +  \int_{0}^{t} T(t-s)\tilde{F}(s)ds.$$
\end{lemma}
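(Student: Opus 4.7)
The plan is to decompose $V(t+n\omega)$ via a change of variable so that the problem reduces to Lemma~\ref{lemme1} plus a tail integral, and then to exploit the $\omega$-periodicity of $\tilde{F}$ from Proposition~\ref{proposition}(c) to upgrade the pointwise convergence of $\phi_n(s):=\E\|F(n\omega+s)-\tilde F(s)\|^2$ into uniform-in-$t$ control. First I would perform the substitution $u=s-n\omega$ in $V(t+n\omega)=\int_0^{t+n\omega}T(t+n\omega-s)F(s)\,ds$ and split at $u=0$; the piece over $u\in[-n\omega,0]$, after a further change of variable $v=-u$, is precisely the $X_n(t)=\int_0^{n\omega}T(t+v)F(n\omega-v)\,dv$ of Lemma~\ref{lemme1}. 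This gives
\[
V(t+n\omega)=X_n(t)+\int_0^t T(t-s)F(n\omega+s)\,ds,
\]
and, subtracting $V^{*}(t)=U(t)+\int_0^t T(t-s)\tilde F(s)\,ds$,
\[
V(t+n\omega)-V^{*}(t)=\bigl(X_n(t)-U(t)\bigr)+\int_0^t T(t-s)\bigl(F(n\omega+s)-\tilde F(s)\bigr)\,ds.
\]

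Applying $\|a+b\|^2\le 2\|a\|^2+2\|b\|^2$, Cauchy--Schwarz, and $\|T(t-s)\|\le Me^{-a(t-s)}$ reduces the goal to showing
\[
\E\|X_n(t)-U(t)\|^2\longrightarrow 0 \quad\text{and}\quad \int_0^t e^{-a(t-s)}\phi_n(s)\,ds\longrightarrow 0,
\]
both uniformly in $t\ge 0$. The first convergence is already in hand from Lemma~\ref{lemme1}: inspecting its proof, every $t$-dependent factor appears only through $e^{-a(t+\cdot)}\le e^{-a(\cdot)}$ and is dropped in the estimates, so Cauchy-ness, hence convergence to $U(t)$, is uniform in $t$.

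The hard part will be the uniform convergence of the second integral, since $\phi_n(s)\to 0$ only pointwise while $t$ ranges over all of $\R_+$. The key ingredient is that Proposition~\ref{proposition}(c) yields $\tilde F(s+\omega)=\tilde F(s)$ in $\LL^2(\PP,\mat H)$, so the shift identity $\phi_n(s+\omega)=\phi_{n+1}(s)$ holds. Setting $\Psi_m:=\int_0^{\omega}\phi_m(r)\,dr$, dominated convergence on the bounded interval $[0,\omega]$ (using $\phi_m\le 4K$) gives $\Psi_m\to 0$ as $m\to\infty$. I would then slice $[0,t]$ into the slabs $[j\omega,(j+1)\omega]$ for $j=0,\dots,m-1$, $m=\lfloor t/\omega\rfloor$, plus the residual $[m\omega,t]$, and use the shift identity on each slab to obtain
\[
\int_0^t e^{-a(t-s)}\phi_n(s)\,ds \le \sum_{j=0}^{m-1} e^{-a(t-(j+1)\omega)}\Psi_{n+j}+\Psi_{n+m}.
\]
Since every index $n+j\ge n$, and since the exponential weights are bounded by a convergent geometric series of ratio $e^{-a\omega}$ (summing to at most $1/(1-e^{-a\omega})$ independently of $t$), taking $n$ large makes every relevant $\Psi_{n+j}$ uniformly small, producing a bound independent of $t$ that vanishes as $n\to\infty$. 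Combining this with the estimate on $X_n(t)-U(t)$ completes the proof.
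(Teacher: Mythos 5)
Your proposal is correct and follows essentially the same route as the paper: the identical decomposition $V(t+n\omega)=X_n(t)+\int_0^t T(t-s)F(n\omega+s)\,ds$, with the first piece handled by Lemma~\ref{lemme1} and the second by slicing $[0,t]$ into $\omega$-slabs, invoking $\E\|\tilde F(s+k\omega)-\tilde F(s)\|^2=0$ to shift each slab to $[0,\omega]$, dominated convergence there, and the geometric bound $\sum_k e^{-a(t-(k+1)\omega)}\le 1/(1-e^{-a\omega})$. Your $\Psi_m$ bookkeeping is a slightly cleaner packaging, and you are in fact more careful than the paper on one point: you justify that $\E\|X_n(t)-U(t)\|^2\to 0$ \emph{uniformly} in $t$ (by noting the Cauchy estimates in Lemma~\ref{lemme1} are $t$-independent), whereas the paper only asserts pointwise convergence for that term despite claiming a uniform conclusion.
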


\begin{proof}
Let us rewrite 
\begin{align*}
V(t+n\omega) 
 & = \int_{0}^{t+n\omega} T(t+n\omega -s) F(s)ds \\
 & = \int_{-n\omega}^{t} T(t-s) F(s+n\omega)ds \\
 &=  \int_{-n\omega}^{0} T(t-s) F(s+n\omega)ds  +  \int_{0}^{t} T(t-s) F(s+n\omega)ds \\
 & =  \int_{0}^{n\omega} T(t+s) F(s+n\omega)ds + \int_{0}^{t}  T(t-s) F(s+n\omega)ds  \\
 & = X_{n}(t,n) + I(t,n). \\
\end{align*}
We have, 
 \begin{align*}
\E || V(t+n\omega) - V^{*}(t) ||^{2} 
 & = \E \left| \left|  X_{n}(t)+ I(t,n) - U(t) -  \int_{0}^{t} T(t-s)\tilde{F}(s)ds  \right| \right|^{2}\\
 & \leq 2\E \left| \left|  X_{n}(t)- U(t) \right| \right|^{2} \\
 & + 2\E \left| \left|  I(t,n)  -  \int_{0}^{t} T(t-s)\tilde{F}(s)ds  \right| \right|^{2}.
\end{align*}

Using  Lemma \ref{lemme1}, it follows  that $$ \E \left| \left|  X_{n}(t)- U(t) \right| \right|^{2}  \rightarrow 0$$
for all $t \geq 0$. \\

Note that  for $m \omega \leq t < (m+1)\omega $;  $m\in \N$, one has

\begin{align*}
& \E \Big|\Big| I(t,n)-\int_{0}^{t} T(t-s)\tilde{F}(s)ds\Big|\Big|^{2}\\ 
 &= \E  \Big|\Big| \int_{0}^{t}T(t-s)\big(F(s+n\omega)-\tilde{F}(s)\big)ds \Big|\Big|^{2}\\
 &\leq   \E   \Big(\int_{0}^{t}||T(t-s)||\,||F(s+n\omega)-\tilde{F}(s)||ds \Big)^{2}\\
 & \leq   \E   \Big(\int_{0}^{t} Me^{-a(t-s)}\,||F(s+n\omega)-\tilde{F}(s)||ds \Big)^{2}\\
 & \leq   2 \E   \Big(\int_{0}^{m\omega} Me^{-a(t-s)}\,||F(s+n\omega)-\tilde{F}(s)||ds \Big)^{2}\\
 & + 2  \E   \Big(\int_{m\omega}^{t} Me^{-a(t-s)}\,||F(s+n\omega)-\tilde{F}(s)||ds \Big)^{2}.\\
 \end{align*}

But firstly,   $$ 2 \E   \Big(\int_{0}^{m\omega} Me^{-a(t-s)}\,||F(s+n\omega)-\tilde{F}(s)||ds \Big)^{2}$$
\begin{align*}
   & \leq 2 M^{2} \E \Big( \sum_{k=0}^{m-1} \int_{k\omega}^{(k+1)\omega} e^{-a(t-(k+1)\omega )}\,||F(s+n\omega)-\tilde{F}(s)||ds \Big)^{2}\\
    &=   2 M^{2} \E \Big( \sum_{k=0}^{m-1} \int_{0}^{\omega} e^{-a(t-(k+1)\omega )}\,||F(s+(n+k)\omega)-\tilde{F}(s+k\omega)||ds \Big)^{2}\\
    & \leq  2 M^{2} \int_{0}^{\omega} \big( \sum_{k=0}^{m-1}  e^{-a(t-(k+1)\omega )}\big)^{2}ds \int_{0}^{\omega} \E ||F(s+(n+k)\omega)-\tilde{F}(s+k\omega)||^{2}ds.\\
\end{align*}
Since $$\lim_{n\to +\infty} \E ||F(s+(n+k)\omega)-\tilde{F}(s+k\omega)||^{2} = 0 $$ for $s \in [0,\omega]$ and the fact that 
$$ \E ||F(s+(n+k)\omega)-\tilde{F}(s+k\omega)||^{2} \leq 2 K,$$ \\
it follows by Lebesgue's dominated convergence theorem that : 

$$ \lim_{n\to +\infty} \int_{0}^{\omega}  \E ||F(s+(n+k)\omega)-\tilde{F}(s+k\omega)||^{2} = 0. $$

Note that 
$$   \sum_{k=0}^{m-1}  e^{-a(t-(k+1)\omega ) }\leq  \frac{1}{1-e^{-a \omega}}   \quad \textrm{unifmorly in } t,  $$
therefore 
$$ 2 \E   \Big(\int_{0}^{m\omega} Me^{-a(t-s)}\,||F(s+n\omega)-\tilde{F}(s)||ds \Big)^{2} \leq \frac{2M^{2}\omega}{(1-e^{-a\omega})^{2}} \epsilon. $$

On the other hand,  

$$
2  \E  \Big(\int_{m\omega}^{t} Me^{-a(t-s)}\,||F(s+n\omega)-\tilde{F}(s)||ds \Big)^{2} $$

$$ \leq  2M^{2}  \E   \Big(\int_{m\omega}^{(m+1)\omega} ||F(s+n\omega)-\tilde{F}(s)||ds \Big)^{2}$$

$$ =  2M^{2}   \E   \Big(\int_{0}^{\omega} \,||F(s+(n+m)\omega)-\tilde{F}(s+m\omega)||ds \Big)^{2}$$

$$ \leq 2 M^{2} \omega  \int_{0}^{\omega} \E||F(s+(n+m)\omega)-\tilde{F}(s+m\omega)||^{2}ds.$$

But, 
\begin{align*}
\E||F(s+(n+m)\omega)-\tilde{F}(s+m\omega)||^{2} & \leq 2 \E||F(s+(n+m)\omega)-\tilde{F}(s)||^{2} \\
& + 2 \E||\tilde{F}(s+m\omega)-\tilde{F}(s)||^{2}\\
\end{align*}

so that $$ \lim_{n \to +\infty}  \E||F(s+(n+m)\omega)-\tilde{F}(s+m\omega)||^{2}  = 0.$$
Again, using the Lebesgue's dominated convergence theorem   we have 

$$      \lim_{n \to +\infty}  \int_{0}^{\omega} \E||F(s+(n+m)\omega)-\tilde{F}(s+m\omega)||^{2}ds = 0,  $$
and hence 
$$
 \lim_{n \to +\infty}   2  \E  \Big(\int_{m\omega}^{t} Me^{-a(t-s)}\,||F(s+n\omega)-\tilde{F}(s)||ds \Big)^{2}  = 0.$$
Thus 
 
 $$  \lim_{n \to +\infty}   \E \Big|\Big| I(t,n)-\int_{0}^{t} T(t-s)\tilde{F}(s)ds\Big|\Big|^{2} = 0$$
 for all $ t \geq 0$. \\ 
 
 Therefore $$ \lim_{n \to +\infty} \E || V(t+n\omega) - V^{*}(t) ||^{2}  = 0$$ 
uniformly on  $t \geq 0$   for some stochastic process $V^{*}(t) : \R_{+} \rightarrow  \LL^{2}(\PP, \mat{H})$.   
\end{proof}
 
 \begin{lemma}
 \label{lemme3}
Let $G$ be a square mean $\omega$-periodic limit process in $  \LL^{2}(\PP, \mat{H})$ and \\
$B(t)$  a two-sided standard one-dimensional Brownian motion. \\ Under Assumption (H1) the sequence of stochastic process
 $(Y_{n}(t))_{n\geq 1}$, $t\geq 0$, \\  defined by 
$$ Y_{n}(t) =  \int_{-n\omega}^{0} T(t-s) G(s+n\omega)dB(s) $$
 is a Cauchy sequence in $  \LL^{2}(\PP, \mat{H})$ for all  $t \geq 0$. 
\end{lemma}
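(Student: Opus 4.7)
The plan is to mirror the structure of the proof of Lemma \ref{lemme1}, replacing the Cauchy--Schwarz estimates for deterministic integrals with the It\^o isometry for stochastic integrals. First, I would split the difference of consecutive terms as
\begin{align*}
Y_{n+p}(t)-Y_n(t)
&= \int_{-(n+p)\omega}^{-n\omega} T(t-s)\,G(s+(n+p)\omega)\,dB(s)\\
&\quad + \int_{-n\omega}^{0} T(t-s)\bigl[G(s+(n+p)\omega)-G(s+n\omega)\bigr]\,dB(s),
\end{align*}
and then use the elementary inequality $(a+b)^2\leq 2a^2+2b^2$ to bound $\E\|Y_{n+p}(t)-Y_n(t)\|^2 \leq 2J_1(t,n,p)+2J_2(t,n,p)$, where $J_1$ and $J_2$ are the second-moment norms of the two stochastic integrals above.

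Next, I would invoke the It\^o isometry (using that $G$ is $\mathcal{F}_s$-adapted and that $B$ is a Brownian motion) to reduce $J_1$ and $J_2$ to deterministic integrals of the form
$$J_1 = \int_{-(n+p)\omega}^{-n\omega}\|T(t-s)\|^2\,\E\|G(s+(n+p)\omega)\|^2\,ds,$$
$$J_2 = \int_{-n\omega}^{0}\|T(t-s)\|^2\,\E\|G(s+(n+p)\omega)-G(s+n\omega)\|^2\,ds.$$
For $J_1$, the estimate $\|T(t-s)\|\leq Me^{-a(t-s)}$ together with $\E\|G(\cdot)\|^2\leq K$ yields $J_1 \leq \frac{M^2 K}{2a}\,e^{-2a(t+n\omega)}$, which becomes arbitrarily small for $n$ large, independently of $p$.

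For $J_2$, after the change of variables $u=-s$ the integral becomes $\int_0^{n\omega}\|T(t+u)\|^2\,\E\|G((n+p)\omega-u)-G(n\omega-u)\|^2\,du$, exactly paralleling the term $I_2$ in Lemma \ref{lemme1}. Splitting the integral as $\int_0^{N\omega}+\int_{N\omega}^{n\omega}$, the tail piece is dominated by $\frac{2M^2K}{a}e^{-2a(t+N\omega)}$ by boundedness of $G$ and exponential decay of $T$, and is made smaller than $\varepsilon$ by choosing $N$ large. For the head piece, I would use the triangle inequality together with Proposition \ref{proposition}(c) (which gives $\tilde{G}(v+p\omega)=\tilde{G}(v)$) to bound the integrand by $2\E\|G((n+p)\omega-u)-\tilde{G}(-u+N\omega)\|^2 + 2\E\|G(n\omega-u)-\tilde{G}(-u+N\omega)\|^2$ after a second substitution $v=N\omega-u$, both of which vanish as $n\to\infty$ by the $\omega$-periodic limit property of $G$. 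Since the integrand is dominated by $4K\|T(t+u)\|^2 \in L^1([0,N\omega])$, Lebesgue's dominated convergence theorem finishes the head piece.

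The main obstacle is the bookkeeping in Step 4: one must ensure the bounds on both pieces of $J_2$ depend only on $n$ and not on $p$, so that Cauchy-ness in $n$ is genuinely uniform in $p$. A subsidiary technical point is the rigorous use of the It\^o isometry for the stochastic integral on $[-n\omega,0]$ against the two-sided Brownian motion; here the weak Markov property stated in the preceding Proposition justifies treating the integral as a standard It\^o integral over a finite window after an appropriate time-shift of $B$.
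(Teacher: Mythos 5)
Your proposal is correct and follows essentially the same route as the paper's proof: the same two-term decomposition of $Y_{n+p}(t)-Y_n(t)$, the It\^o isometry to reduce to deterministic integrals, the same change of variables leading to the $J_1/J_2$ splitting with a further head/tail split of $J_2$ at $N\omega$, and the same dominated-convergence argument comparing $G$ against $\tilde{G}$ on the head piece. The only difference is cosmetic: you invoke the It\^o isometry cleanly (and flag adaptedness and the two-sided-Brownian-motion issue explicitly), where the paper writes the corresponding intermediate step more informally.
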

We  will denote by $ U^{*}=(U^{*}(t))_{t\geq 0}$ the limit process of  $(Y_{n}(t))_{n\geq 1}$, $t\geq 0$,  in $  \LL^{2}(\PP, \mat{H})$.
\begin{proof}
We have,
\begin{align*}
& \E || Y_{n+p}(t)-Y_{n}(t)||^{2}  = \E \left|\left|  \int_{-(n+p)\omega}^{0} T(t-s) G((n+p)\omega +s)dB(s) \right.\right.\\ 
 & \left. \left. - \int_{-n\omega}^{0} T(t-s) G(n\omega +s)dB(s)\right|\right|^{2} \\
& \leq  2\E \left|\left|  \int_{-(n+p)\omega}^{-n\omega} T(t-s) G((n+p)\omega +s)dB(s)\right|\right|^{2} \\
&   + 2\E \left|\left|  \ \int_{-n\omega}^{0} T(t-s) \Big( G((n+p)\omega +s)- G(n\omega +s) \Big) dB(s)    \right|\right|^{2} \\
& \leq  2\E \Big(  \int_{-(n+p)\omega}^{-n\omega} ||T(t-s)|| \,|| G((n+p)\omega +s)|| dB(s)\Big)^{2} \\
&   + 2\E   \Big(   \int_{-n\omega}^{0}  ||T(t-s)|| \, || G((n+p)\omega +s)- G(n\omega +s)|| dB(s)    \Big)^{2} \\
& \leq  2\E  \int_{-(n+p)\omega}^{-n\omega} ||T(t-s)||^{2} \,|| G((n+p)\omega +s)||^{2} ds \\
&   + 2\E  \int_{-n\omega}^{0} ||T(t-s) ||^{2} ||G((n+p)\omega +s)- G(n\omega +s)||^{2} ds \\
 & \leq 2  \E  \int_{n\omega}^{(n+p)\omega} || T(t+s)||^{2} \; || G((n+p)\omega -s)||^{2}  ds \\
    & + 2  \E \int_{0}^{n\omega} || T(t+s)||^{2} \;||G((n+p)\omega -s)- G(n\omega -s)||^{2} ds \\
    & \leq J_{1}(t,n,p)  +   J_{2}(t,n,p)
   \end{align*}
where 
$$  J_{1}(t,n,p)  = 2  \E  \int_{n\omega}^{(n+p)\omega} || T(t+s)||^{2} \; || G((n+p)\omega -s)||^{2}  ds$$
$$   J_{2}(t,n,p) = 2  \E \int_{0}^{n\omega} || T(t+s)||^{2} \;||G((n+p)\omega -s)- G(n\omega -s)||^{2} ds. $$

Estimates of  $J_{1}(t,n,p)$. 
\begin{align*}
J_{1}(t,n,p) = 2  \E \int_{n\omega}^{(n+p)\omega} || T(t+s)||^{2} \; || G((n+p)\omega -s)||^{2} ds.
\end{align*}

\begin{align*}
 & \leq 2  \E \int_{n\omega}^{(n+p)\omega}|| T(t+s)||^{2} \; || G((n+p)\omega -s)||^{2} ds\\
 & \leq  2   \int_{n\omega}^{(n+p)\omega} M^{2}e^{-2a(t+s)}\E|| G((n+p)\omega -s)||^{2} ds \\
 & \leq 2 M^{2} K \int_{n\omega}^{(n+p)\omega}  e^{-2as}ds  \\
  & \leq 2 M^{2} K \int_{n\omega}^{+\infty}  e^{-2as}ds  \\
& \leq \frac{2M^{2}K}{2a} e^{-2a n \omega}
 \end{align*}
Now we consider the integers  $N_1$ and $N_2$ such that
$$ \frac{M^{2} K}{a}e^{-2an\omega}  \leq  \epsilon  \quad \forall n\geq N_1$$

$$   \frac{4M^{2}K}{a} e^{-2an\omega}  \leq \epsilon  \quad \forall n\geq N_2$$

and set  $ N= \max{ (N_{1},N_{2})}$. For $n\geq N$, we have : 

\begin{align*}
J_{2}(t,n,p) = 2  \E \int_{0}^{n\omega} || T(t+s)||^{2} \;||G((n+p)\omega -s)- G(n\omega -s)||^{2} ds
\end{align*}

\begin{align*}
   & \leq 2 \int_{0}^{n\omega} M^{2}e^{-2a(t+s)}  \; \E  ||G((n+p)\omega -s)- G(n\omega -s)||^{2} ds\\
 & \leq  2  \int_{0}^{N\omega} M^{2}e^{-2a(t+s)}\, \E  ||G((n+p)\omega -s)- G(n\omega -s)||^{2}ds \\
 & + 2  \int_{N\omega}^{n\omega} M^{2}e^{-2a(t+s)}\, \E  ||G((n+p)\omega -s)- G(n\omega -s)||^{2}ds \\
 & \leq J_{3}(t,n,p) + J_{4}(t,n,p) \\
\end{align*}
where 
$$   J_{3}(t,n,p) = 2  \int_{0}^{N\omega} M^{2}e^{-2a(t+s)}\, \E  ||G((n+p)\omega -s)- G(n\omega -s)||^{2}ds      $$
$$    J_{4}(t,n,p) =  2  \int_{N\omega}^{n\omega} M^{2}e^{-2a(t+s)}\, \E  ||G((n+p)\omega -s)- G(n\omega -s)||^{2}ds.    $$

Since $G$ is a square mean $\omega$ periodic limit process then 
 $$\lim_{ k \rightarrow +\infty } \E || G(t+k\omega)-\tilde{G}(t)||^{2} = 0, \quad \forall t \geq 0$$
 is well defined for each $t\geq 0$ when $k \in \mathbb{N}$ for some stochastic process\\
 $\tilde{G} : \R_{+} \rightarrow  \LL^{2}(\PP, \mat{H})$.

Now, 
\begin{align*}
  & J_{3}(t,n,p) =  2  \int_{0}^{N\omega} M^{2}e^{-2a(t+s)}\,\E || G((n+p)\omega -s)-G(n\omega -s)||^{2}ds \\
  & = 2  \int_{0}^{N\omega} M^{2}e^{-2a(t-s+N\omega)}\,\E || G((n-N+p)\omega + s)- G((n-N)\omega + s)||^{2}ds \\
  & \leq   4   \int_{0}^{N\omega} M^{2}e^{-2a(t-s+N\omega)}\, \E|| G((n-N+p)\omega + s)-\tilde{G} (s)||^{2}ds \\
 & +  4  \int_{0}^{N\omega} M^{2}e^{-2a(t-s+N\omega)}\, \E||G((n-N)\omega + s)-\tilde{G} (s)||^{2}ds \\
\end{align*}
Since 
$$ \lim_{n\to \infty}\, \E|| G((n-N+p)\omega + s)-\tilde{G} (s)||^{2} = 0  $$ 
$$ \lim_{n\to \infty}\, \E|| G((n-N)\omega + s)-\tilde{G} (s)||^{2} = 0  $$ 
and  using the fact that \\  
$ \max \{  \E|| G((n-N+p)\omega + s) -\tilde{G} (s)||^{2}, \E|| G((n-N)\omega + s) -\tilde{G} (s)||^{2} \} \leq 2K,$\\
it follows by  Lebesgue's dominated convergence theorem that 

$$ \lim_{n\to \infty}\, J_{3}(t,n,p)= 0  $$
for all $t \geq 0$. \\

On the other hand,
\begin{equation*}
J_{4}(t,n,p) =  2  \int_{N\omega}^{n\omega} M^{2}e^{-2a(t+s)}\, \E  ||G((n+p)\omega -s)- G(n\omega -s)||^{2}ds. 
\end{equation*}

Since $\E || G((n+p)\omega -s)-G(n\omega -s)||^{2} \leq 2K$,  we get
\begin{align*}
J_{4}(t,n,p) &  \leq 4M^{2}K   \int_{N\omega}^{+\infty} M^{2}e^{-2as} ds  \\
& \leq   \frac{4M^{2}K}{a} e^{-2aN\omega}\\
& \leq   \frac{4M^{2}K}{a} e^{-2aN_{2}\omega}\\ 
\end{align*}
so that 
$$J_{4}(t,n,p) \leq \epsilon.$$

This shows that $(Y_{n}(t))_{n\geq 1}$, $t\geq 0$, is a Cauchy sequence in $\LL^{2}(\PP, \mat{H})$ 
\end{proof}
 
\begin{lemma}
\label{lemme4}
Let $G$ be square mean $\omega$-periodic limit in $  \LL^{2}(\PP, \mat{H})$ such that 
$$ \lim_{n \to +\infty} \E || G(t+n\omega) - \tilde{G} (t) ||^{2}  = 0$$  for all $t \geq 0$. 
Define
$$  H(t) = \int_{0}^{t} T(t-s)G(s)dB(s) $$ Under Assumption (H1) we have 
  $$ \lim_{n \to +\infty} \E || H(t+n\omega) - H^{*}(t) ||^{2}  = 0$$  uniformly on  $t \geq 0$ where 
 $$ H^{*}(t)  = U^{*}(t) +  \int_{0}^{t} T(t-s)\tilde{G} (s)dB(s).$$ 
\end{lemma}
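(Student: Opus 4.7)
The plan is to mirror the proof of Lemma~2.2, replacing Cauchy--Schwarz with the It\^o isometry wherever the stochastic integral enters, and invoking Lemma~2.3 in place of Lemma~2.1. The argument splits into three stages: first, rewrite the shifted stochastic integral $H(t+n\omega)$ as the sum of a ``$Y_n$-type'' term and a term depending only on $G(\cdot+n\omega)$ on $[0,t]$; second, compare term-by-term with the decomposition of $H^*(t)$ and reduce to an ordinary Lebesgue integral via It\^o's isometry; third, execute a dominated convergence argument on the $\omega$-sliced interval $[0,t]$ exactly as in Lemma~2.2.

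For the rewriting, start from $H(t+n\omega) = \int_0^{t+n\omega} T(t+n\omega - s)G(s)\,dB(s)$. The substitution $u = s - n\omega$ combined with $dB(u+n\omega) = d\tilde{B}^{n\omega}(u)$ yields
\begin{equation*}
H(t+n\omega) = \int_{-n\omega}^0 T(t-u) G(u+n\omega)\,d\tilde{B}^{n\omega}(u) + \int_0^t T(t-u) G(u+n\omega)\,d\tilde{B}^{n\omega}(u).
\end{equation*}
Proposition~2.2 asserts that $\tilde{B}^{n\omega}$ is a Brownian motion with the same law as $B$, so at the level of $L^2$-moments the two summands may be replaced by $Y_n(t)$ and $\int_0^t T(t-u)G(u+n\omega)\,dB(u)$ respectively. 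Subtracting $H^*(t) = U^*(t) + \int_0^t T(t-s)\tilde{G}(s)\,dB(s)$, applying $(a+b)^2 \leq 2a^2 + 2b^2$, and invoking It\^o's isometry together with $\|T(t)\|\leq Me^{-at}$ produces
\begin{equation*}
\E\|H(t+n\omega) - H^*(t)\|^2 \leq 2\,\E\|Y_n(t) - U^*(t)\|^2 + 2M^2 \int_0^t e^{-2a(t-s)} \E\|G(s+n\omega) - \tilde{G}(s)\|^2\,ds.
\end{equation*}
The first summand vanishes as $n\to\infty$ by Lemma~2.3.

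For the remaining integral, fix $t$ with $m\omega \leq t < (m+1)\omega$, partition $[0,t] = \bigcup_{k=0}^{m-1}[k\omega,(k+1)\omega] \cup [m\omega, t]$, and on the $k$-th slice use the shift $s = k\omega + u$ to bound the slice contribution by $e^{-2a(t-(k+1)\omega)} \int_0^\omega \E\|G(u+(n+k)\omega) - \tilde{G}(u+k\omega)\|^2\,du$. A triangle inequality together with Proposition~2.1(c) (extended to multiples of $\omega$ via its Remark) reduces the integrand to $\E\|G(u+(n+k)\omega) - \tilde{G}(u)\|^2$, which tends to $0$ as $n\to\infty$ by the hypothesis on $G$; Proposition~2.1(d) supplies the uniform dominating bound $4K$, so Lebesgue's dominated convergence theorem forces each inner $du$-integral to vanish. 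Since $\sum_{k=0}^{m-1} e^{-2a(t-(k+1)\omega)} \leq (1-e^{-2a\omega})^{-1}$ uniformly in $t$, summing over $k$ yields a bound that goes to $0$ uniformly in $t$; the residual slice $[m\omega,t]$ is handled analogously, giving the required uniform convergence.

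The technically delicate point is the change-of-integrator justification: replacing $d\tilde{B}^{n\omega}$ by $dB$ rests on Proposition~2.2 together with the fact that the law of an It\^o integral is determined by the joint law of its integrand with the driving Brownian motion. Thereafter It\^o's isometry eliminates the squared-integral difficulty that complicated the analogous deterministic estimates in Lemma~2.2, and the remaining slicing argument is essentially routine.
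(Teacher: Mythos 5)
Your proposal follows essentially the same route as the paper's own proof: the same substitution $u=s-n\omega$ and appeal to the weak Markov property (Proposition 2.2) to replace $d\tilde{B}^{n\omega}$ by $dB$, the same decomposition into $Y_n(t)+\int_0^t T(t-s)G(s+n\omega)\,dB(s)$ handled by Lemma \ref{lemme3} and It\^o's isometry respectively, and the same $\omega$-slicing of $[0,t]$ with the geometric bound $\sum_k e^{-2a(t-(k+1)\omega)}\le (1-e^{-2a\omega})^{-1}$ and dominated convergence. The argument is correct and matches the paper's, including your explicit flagging of the change-of-integrator step, which the paper treats with the same level of care.
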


\begin{proof}
Let us rewrite, 
\begin{align*}
H(t+n\omega) 
 & = \int_{0}^{t+n\omega} T(t+n\omega -s) G(s)dB(s) \\
 & = \int_{-n\omega}^{t} T(t-s) G(s+n\omega)dB(s+n\omega). \\
 \end{align*}
 Let $\tilde{B}(s) = B(s+n\omega)-B(n\omega)$ for each $s \in \mathbb{R}$.  By the weak Markov property 
 $\tilde{B}$ is also a two sided Brownian motion and has the same distribution as $B$. Moreover 
 $\{ \tilde{B}(s), s \in \mathbb{R} \}$ is a two sided  Brownian motion independent of $B(n\omega)$.
 Thus  
 \begin{align*}
H(t+n\omega) 
 &=  \int_{-n\omega}^{t} T(t-s) G(s+n\omega)d\tilde{B}(s)\\
 &=  \int_{-n\omega}^{0} T(t-s) G(s+n\omega)d\tilde{B}(s)  +  \int_{0}^{t} T(t-s) G(s+n\omega)d\tilde{B}(s)\\
 & = Y_{n}(t) + J(t,n) \\
\end{align*}
where $$  J(t,n) =  \int_{0}^{t} T(t-s) G(s+n\omega)d\tilde{B}(s)=  \int_{0}^{t} T(t-s) G(s+n\omega)dB(s). $$
We have 
 \begin{align*}
& \E || H(t+n\omega) - H^{*}(t) ||^{2} \\ 
 & = \E \left| \left|  Y_{n}(t) + J(t,n) - U^{*}(t) -  \int_{0}^{t} T(t-s)\tilde{G} (s)ds  \right| \right|^{2}\\
 & \leq \E \left| \left|  Y_{n}(t)- U^{*}(t) \right| \right|^{2} \\
 & + \E \left| \left|  J(t,n)  -  \int_{0}^{t} T(t-s)\tilde{G} (s)ds  \right| \right|^{2}
\end{align*}

Using  Lemma \ref{lemme3}, it follows that  $$ \E \left| \left|  Y_{n}(t)- U^{*}(t) \right| \right|^{2}  \rightarrow 0$$
for all $t \geq 0$, when $ n \to +\infty$.  \\

For $m \omega \leq t < (m+1)\omega $;  $m\in \N$, one has
\begin{align*}
&\E \Big|\Big| J(t,n)-\int_{0}^{t} T(t-s)\tilde{G} (s)dB(s)  \Big|\Big|^{2}\\ 
 &= \E  \Big|\Big| \int_{0}^{t}T(t-s)\big(G(s+n\omega)-\tilde{G} (s)\big)dB(s)  \Big|\Big|^{2}\\
 &\leq   \E   \Big(\int_{0}^{t}||T(t-s)||\,||G(s+n\omega)-\tilde{G} (s)||dB(s)  \Big)^{2}\\
 & \leq   \E   \Big(\int_{0}^{t} Me^{-a(t-s)}\,||G(s+n\omega)-\tilde{G} (s)||dB(s)  \Big)^{2}\\
 & = \int_{0}^{t} M^{2}e^{-2a(t-s)}\, \E ||G(s+n\omega)-\tilde{G} (s)||^{2}ds \\
 & \leq    \int_{0}^{m\omega}  M^{2}e^{-2a(t-s)}\, \E ||G(s+n\omega)-\tilde{G} (s)||^{2}ds \\
 & +  \int_{m\omega}^{t}  M^{2}e^{-2a(t-s)}\, \E ||G(s+n\omega)-\tilde{G} (s)||^{2}ds. \\
 \end{align*}

Now,    $$  \int_{0}^{m\omega}  M^{2}e^{-2a(t-s)}\, \E ||G(s+n\omega)-\tilde{G} (s)||^{2}ds$$ 
\begin{align*}
   & \leq M^{2}  \sum_{k=0}^{m-1} \int_{k\omega}^{(k+1)\omega} e^{-2a(t-(k+1)\omega )}\,\E||G(s+n\omega)-\tilde{G} (s)||^{2} ds \\
    &=   M^{2} \int_{0}^{\omega}  \sum_{k=0}^{m-1}e^{-2a(t-(k+1)\omega )}\,\E ||G(s+(n+k)\omega)-\tilde{G} (s+k\omega)||^{2}ds\\
    & \leq 2   M^{2} \int_{0}^{\omega}  \sum_{k=0}^{m-1}e^{-2a(t-(k+1)\omega )}\,\E ||G(s+(n+k)\omega)-\tilde{G} (s)||^{2}ds\\
    & +  2 M^{2} \int_{0}^{\omega}  \sum_{k=0}^{m-1}e^{-2a(t-(k+1)\omega )}\,\E ||\tilde{G} (s+k\omega)-\tilde{G} (s)||^{2}ds\\
    & = 2   M^{2} \int_{0}^{\omega}  \sum_{k=0}^{m-1}e^{-2a(t-(k+1)\omega )}\,\E ||G(s+(n+k)\omega)-\tilde{G} (s)||^{2}ds\\
\end{align*}
because $$  \E ||\tilde{G} (s+k\omega)-\tilde{G} (s)||^{2} = 0 \quad   \forall k \geq 0.$$

Since $$\lim_{n\to +\infty} \E ||G(s+(n+k)\omega)-\tilde{G} (s)||^{2} = 0, $$ for $s \in [0,\omega]$, $k\geq 0$ with 
$ \E ||G(s+(n+k)\omega)-\tilde{G} (s)||^{2} \leq 2 K $  \\  and using the fact that 
$$\sum_{k=0}^{m-1}e^{-2a(t-(k+1)\omega )} \leq \frac{1}{1-e^{-2a\omega}} \; \forall t \geq 0,$$ 
it follows by Lebesgue's  dominated convergence theorem that  : 

$$ \lim_{n\to +\infty} \int_{0}^{\omega}  \sum_{k=0}^{m-1}e^{-2a(t-(k+1)\omega )}\,\E ||G(s+(n+k)\omega)-\tilde{G} (s)||^{2}ds  = 0 $$
uniformly on  $t \geq 0$.\\

On the other hand,  

$$ \int_{m\omega}^{t}  M^{2}e^{-2a(t-s)}\, \E ||G(s+n\omega)-\tilde{G} (s)||^{2}ds $$

\begin{align*}
   & \leq   \int_{m\omega}^{(m+1)\omega}  M^{2}e^{-2a(t-s)}\, \E ||G(s+n\omega)-\tilde{G} (s)||^{2}ds\\
   & = \int_{0}^{\omega}  M^{2}e^{-2a(t-s-m\omega)}\, \E ||G(s+(n+m)\omega)-\tilde{G} (s+m\omega)||^{2}ds\\
   & \leq M^{2} \int_{0}^{\omega} \E ||G(s+(n+m)\omega)-\tilde{G} (s+m\omega)||^{2}ds\\
   & \leq 2M^{2} \int_{0}^{\omega} \E ||G(s+(n+m)\omega)-\tilde{G} (s)||^{2}ds\\ 
   & +  2M^{2} \int_{0}^{\omega} \E ||\tilde{G} (s+m\omega)-\tilde{G} (s)||^{2}ds\\
   & = 2M^{2} \int_{0}^{\omega} \E ||G(s+(n+m)\omega)-\tilde{G} (s)||^{2}ds\\ 
\end{align*}
because $$ \E ||\tilde{G} (s+m\omega)-\tilde{G} (s)||^{2} = 0 \quad   \forall m \geq 0.$$

Since $$\lim_{n\to +\infty} \E ||G(s+(n+m)\omega)-\tilde{G} (s)||^{2} = 0, $$ for $s \in [0,\omega]$, $m\geq 0$ and 
$$  \E ||G(s+(n+m)\omega)-\tilde{G} (s)||^{2} \leq 2 K,$$ 
again by the Lebesgue dominated convergence theorem, we have : 

$$      \lim_{n \to +\infty}  2M^{2} \int_{0}^{\omega} \E ||G(s+(n+m)\omega)-\tilde{G} (s)||^{2}ds = 0 $$
so  that 
$$ \int_{m\omega}^{t}  M^{2}e^{-2a(t-s)}\, \E ||G(s+n\omega)-\tilde{G} (s)||^{2}ds.$$

In view of the above, it follows that 
$$  \lim_{n\to +\infty} \E \Big|\Big| J(t,n)-\int_{0}^{t} T(t-s)\tilde{G} (s)dB(s)  \Big|\Big|^{2} = 0 $$

uniformly on $t \geq 0$. \\
Therefore $$ \lim_{n \to +\infty} \E || H(t+n\omega) - H^{*}(t) ||^{2}  = 0$$ 
 uniformly in $t \geq 0$ for the stochastic process $H^{*}(t) : \R_{+} \rightarrow  \LL^{2}(\PP, \mat{H})$\\
 defined as above.
 \end{proof}
 
 Now we can establish the main result of this section.
 \begin{theorem}
 \label{main}
 Let $f ,g : \mathbb{R}_{+} \times \LL^{2}(\PP, \mat{H}) \rightarrow \LL^{2}(\PP, \mat{H})$ be  square mean $\omega$ periodic limit processes 
 in $t \geq 0$ uniformly in $X$ for bounded subsets of  $\LL^{2}(\PP, \mat{H})$. Assume that $f,g$ satisfies a  Lipschitz condition, uniformly in $t \geq 0$ :   that is, there exist  constants  $L_f > 0$ and $L_g >0$   such that
  $$   \E ||f(t,X)-f(t,Y)||^{2} \leq L_f\, \E||X-Y||^{2} \quad \forall t \geq 0, \, \forall X,Y \in \LL^{2}(\PP, \mat{H})$$
    $$   \E ||g(t,X)-g(t,Y)||^{2} \leq L_g\, \E||X-Y||^{2} \quad \forall t \geq 0, \, \forall X,Y \in \LL^{2}(\PP, \mat{H}). $$

 If $$  2M^{2} \big(L_{f} \frac{1}{a^{2}} + L_{g} \frac{1}{a}\big)   < 1 $$ then, there is  a unique square mean asymptotically 
 $\omega$-periodic mild solution \\ of problem (\ref{eq: base}). 
 
 \end{theorem}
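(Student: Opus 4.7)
The plan is to recast the SDE as a fixed-point problem on the Banach space $AP_{\omega}\bigl(\R_{+},\LL^{2}(\PP,\mat{H})\bigr)$ and apply the Banach contraction principle. Define the operator
\[
(\Gamma X)(t) \;=\; T(t)c_{0} \;+\; \int_{0}^{t} T(t-s)f(s,X(s))\,ds \;+\; \int_{0}^{t} T(t-s)g(s,X(s))\,dB(s).
\]
A mild solution of \eqref{eq: base} is precisely a fixed point of $\Gamma$, so the statement reduces to showing that (a) $\Gamma$ sends $AP_{\omega}$ into itself and (b) $\Gamma$ is a strict contraction under the standing bound $2M^{2}(L_{f}/a^{2}+L_{g}/a)<1$.

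For (a), first observe that $t\mapsto T(t)c_{0}$ belongs to $C_{0}\bigl(\R_{+},\LL^{2}(\PP,\mat{H})\bigr)$ thanks to the exponential stability in (H1), and hence trivially to $AP_{\omega}$ with zero periodic component. Next, given $X\in AP_{\omega}\subseteq P_{\omega}L$, Theorem \ref{composition} (applied successively to $f$ and to $g$, which are $\omega$-periodic limit uniformly on bounded sets of $\LL^{2}(\PP,\mat{H})$) yields that $s\mapsto f(s,X(s))$ and $s\mapsto g(s,X(s))$ are themselves in $P_{\omega}L$. Then Lemma \ref{lemme2} applied to $F(s)=f(s,X(s))$ gives
\[
\lim_{n\to+\infty}\E\Bigl\|\int_{0}^{t+n\omega}T(t+n\omega-s)f(s,X(s))\,ds - V^{*}(t)\Bigr\|^{2} = 0
\]
uniformly in $t\ge 0$, and analogously Lemma \ref{lemme4} handles the stochastic integral. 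Adding these and invoking the equivalence $(\mathrm{ii})\Leftrightarrow(\mathrm{i})$ of Theorem \ref{criteria} allows us to conclude that $\Gamma X\in AP_{\omega}$.

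For (b), given $X,Y\in AP_{\omega}$, I will estimate $\E\|(\Gamma X)(t)-(\Gamma Y)(t)\|^{2}$ with the inequality $\|a+b\|^{2}\le 2\|a\|^{2}+2\|b\|^{2}$ to separate the two integral terms. The deterministic piece is bounded by the Cauchy--Schwarz trick
\[
\E\Bigl\|\int_{0}^{t}T(t-s)\bigl(f(s,X(s))-f(s,Y(s))\bigr)ds\Bigr\|^{2} \le \Bigl(\int_{0}^{t}Me^{-a(t-s)}ds\Bigr)\int_{0}^{t}Me^{-a(t-s)}L_{f}\E\|X(s)-Y(s)\|^{2}\,ds,
\]
which is controlled by $(M^{2}L_{f}/a^{2})\,\|X-Y\|_{\infty}^{2}$. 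The stochastic piece is treated via It\^o's isometry, $\|T(t)\|\le Me^{-at}$ and the Lipschitz bound on $g$, yielding a contribution of order $(M^{2}L_{g}/a)\,\|X-Y\|_{\infty}^{2}$. Combining them produces
\[
\|\Gamma X - \Gamma Y\|_{\infty}^{2} \;\le\; 2M^{2}\Bigl(\tfrac{L_{f}}{a^{2}} + \tfrac{L_{g}}{a}\Bigr)\|X-Y\|_{\infty}^{2},
\]
and the hypothesis makes the multiplicative constant strictly less than $1$. A direct application of the Banach fixed-point theorem on the complete metric space $AP_{\omega}$ then delivers the unique square-mean asymptotically $\omega$-periodic mild solution.

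The main obstacle is step (a): one has to ensure that \emph{both} integral operators preserve the class $AP_{\omega}$, and this hinges on combining three earlier results (the composition theorem with the Lipschitz condition, Lemma \ref{lemme2}, Lemma \ref{lemme4}) and then upgrading the pointwise $\omega$-periodic limit to uniform convergence in $t$ via Theorem \ref{criteria}(ii). The contraction step is by comparison routine once the semigroup decay estimate and It\^o isometry are in hand.
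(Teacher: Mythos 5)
Your proposal is correct and follows essentially the same route as the paper: the same operator $\Gamma$ on $AP_{\omega}\bigl(\R_{+},\LL^{2}(\PP,\mat{H})\bigr)$, invariance established via Theorem \ref{composition}, Lemmas \ref{lemme2} and \ref{lemme4}, and Theorem \ref{criteria}, and the same contraction estimate (Cauchy--Schwarz for the drift term, It\^o isometry for the stochastic term) yielding the constant $2M^{2}(L_{f}/a^{2}+L_{g}/a)$ before invoking the Banach fixed-point theorem.
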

 
 \begin{proof}
 
 We define the continuous operator $\Gamma$ on the Banach space \\ $ AP_{\omega}(\mathbb{R}_{+}, \LL^{2}(\PP, \mat{H}))$ by 
 $$  (\Gamma X )(t) =  T(t) c_{0} + \int_{0}^{t} T(t-s)f(s,X(s))ds +  \int_{0}^{t} T(t-s)g(s,X(s))dB(s).   $$
  Note that $ T(t) c_{0} $  is in $ C_{0}(\mathbb{R}_{+}, \LL^{2}(\PP, \mat{H})) \subseteq AP_{\omega}(\mathbb{R}_{+}, \LL^{2}(\PP, \mat{H})) $ \\
Now we denote $ F(s) = f(s,X(s))$,  $ G(s) = g(s,X(s))$ \\
 In view of Theorem \ref{composition} if $X \in AP_{\omega}(\mathbb{R}_{+}, \LL^{2}(\PP, \mat{H}))$  then \\ 
 $F, G  \in P_{\omega}L(\mathbb{R}_{+}, \LL^{2}(\PP, \mat{H}))$.\\
  Applying Lemma \ref{lemme2}, Lemma \ref{lemme4} and  Theorem \ref{criteria}, it follows  that 
 $$ \int_{0}^{t} T(t-s)f(s,X(s))ds  \in AP_{\omega}(\mathbb{R}_{+}, \LL^{2}(\PP, \mat{H}))$$  and 
 $$ \int_{0}^{t} T(t-s)g(s,X(s))dB(s) \in  AP_{\omega}(\mathbb{R}_{+}, \LL^{2}(\PP, \mat{H})).$$
 Hence the operator  $\Gamma$ maps the space 
 $ AP_{\omega}(\mathbb{R}_{+}, \LL^{2}(\PP, \mat{H}))$  into itself. \\
 Finally for any $X, Y \in  AP_{\omega}(\mathbb{R}_{+}, \LL^{2}(\PP, \mat{H}))$
 we have  
 
\begin{align*}
& \E || \Gamma X(t)-\Gamma Y(t)||^{2} \\
 & \leq  2M^{2} \big( \int_{0}^{t} e^{-a(t-s)} ds \big)\, \E \int_{0}^{t}
e^{-a(t-s)} ||f(s, X(s))-f(s,Y(s))||^{2}ds \\
 &  + 2M^{2} \E \int_{0}^{t} e^{-2a(t-s)}\, ||g(s,X(s))-g(s,Y(s))||^{2}ds \\
 &\leq  2M^{2}L_{f}\sup_{s\geq 0} \E || X(s)-Y(s)||^{2}  \big( \int_{0}^{t} e^{-a(t-s)} ds \big)^{2}\\
 & + 2M^{2} L_{g}\sup_{s\geq 0} \E || X(s)- Y(s)||^{2}  \int_{0}^{t} e^{-2a(t-s)}ds\\
 & \leq  2M^{2} \big(L_{f} \frac{1}{a^{2}} + L_{g} \frac{1}{a}\big) \sup_{s\geq 0} \E || X(s)-Y(s)||^{2}.\\
\end{align*}
	
This implies that 
\[ || \Gamma X - \Gamma Y||_{\infty}^{2} \leq 2M^{2} \big(L_{f} \frac{1}{a^{2}} + L_{g} \frac{1}{a}\big) 
|| X- Y ||_{\infty}^{2}.  \]

Consequently, if $ 2M^{2} \big(L_{f} \frac{1}{a^{2}} + L_{g} \frac{1}{a}\big)  < 1$  then $\Gamma$ is a contraction mapping.\\
The proof is completed by using the well-known  Banach fixed-point theorem. \\
 \end{proof}
 
\begin{center}
	\section{An illustrative example}		
\end{center} 
In order to illustrate usefulness of the theoretical results established in the preceding section, we consider the following one-dimensional stochastic heat equation with Dirichlet boundary conditions :
\begin{equation}
\label{eq: example}
\left\{
\begin{array}{l}
d u(t,x) =   \frac{\partial^2 u(t,x)}{\partial x^2} dt  +  f(t,u(t,x)) dt   + g(t,u(t,x)) dB(t) \\
u(t,0)=u(t,1)=0, t\in \mathbb{R}^+,\\
u(0,x)= h(x), \; x \in [0,1].
\end{array}
\right.
\end{equation}
where $B(t)$ is a two-sided  standard one-dimensional Brownian motion  defined on the filtered probability space $( \Omega, \FF, \FF_{t}, \PP) , $   $h \in L^2[0, 1]$ and the functions $f$ and $g$  are defined as  

$$   f(t,u(t,x)) = u(t,x) \psi(t) \quad \textrm{and} \quad  g(t,u(t,x)) = u(t,x) \phi(t),      $$
where $\psi$  and $\phi$ are $\omega$-periodic limit (function) deterministic processes.
Clearly both the  $f$ and $g$ satisfy the Lipschitz conditions with  $L_f = ||\psi||_{\infty} $ and $ L_g =  ||\phi||_{\infty}$.
For instance,  $\psi(t)$ and $\phi(t) $ can be chosen  to be equal to the following  $2$-periodic limit (function) deterministic process (see \cite{xie1})  given by 
\begin{equation}
a_{\{k_{n}\}}(t) =  \left\{
\begin{array}{ll}
1, & t =  2n-1, n \in \mathbb{N} \\
0, & t \in \{ 0, 2\} \cup \{ 2n+1 - k_{n}\}\cup \{ 2n+1 + k_{n}\}\\
\textrm{linear},  &  \; \textrm{in} \; \textrm{between}. 
\end{array}
\right.
\end{equation}
where  $ \{ k_{n} \} \subset ]0,1[$ such that $k_{n} > k_{n+1}$, $k_{n} \to 0$ as $n \to +\infty$.\\
Note that if we define $b : \R_{+} \to  \R $ by 
\begin{equation}
b(t) =  \left\{
\begin{array}{ll}
1, & t =  2n-1, n \in \mathbb{N} \\
0, &  \textrm{otherwise}. 
\end{array}
\right.
\end{equation}
then we  have $b(t) = \lim_{m \to +\infty} a_{\{k_{n}\}}(t+2m)$ so $a_{\{k_{n}\}}$ is a $2$-periodic limit (function) deterministic process.

 Define
$$\mathcal{D}(A)=\{v \;  \textrm{continuous} / v'(r)\; \textrm{absolutely continuous on } [0,1],  \;  v''(r) \in L^{2}[0, 1]$$
$$\, and\; v(0)= v(1)=0\}$$
$$A v =v''\,for\,all\, v \in \mathcal{D}(A).$$
Let $\phi_n(t)=\sqrt{2}\sin(n\pi t)$ for all $n \in \mathbb{N}$. $\phi_n$ are eigenfunctions of the operator $(A, \mathcal{D}(A))$ with eigenvalues $\lambda_n=-n^2$. 
Then, $A$ generates a $C_0$ semigroup  $(T(t))$ of the form
$$T(t)\phi = \sum_{n=1}^{\infty} e^{-n^2  \pi^{2} t}\langle \phi, \phi_n \rangle \phi_n, \;\forall \phi \in L^2[0, 1]$$
and 
$$\vert\vert T(t) \vert\vert \le e^{-\pi^{2} t}, \; for\,  all \,t\ge0 $$ 
Thus $ M=1$  and $a = \pi^{2}$. \\

The equation (\ref{eq: base}) is of the form
\begin{equation*}
	\left\{
	\begin{array}{l}
		dy(t) =  A y(t)dt + f(t,y(t))dt  + g(t,y(t))dB(t) , \\
		y(0)= c_{0}.
	\end{array}
	\right.
\end{equation*}
By using Theorem \ref{main}, we claim that
\begin{theorem}
	If $ ||\psi||_{\infty} + ||\phi||_{\infty}\pi^{2} <  \pi^{4}  / 2$ then the equation	(\ref{eq: example}) admits a unique square mean asymptotically 
	$\omega$-periodic mild solution.
\end{theorem}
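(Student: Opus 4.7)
The plan is to cast the stochastic heat equation (\ref{eq: example}) into the abstract framework of Section 3 and then verify, one by one, that every hypothesis of Theorem \ref{main} is satisfied. Once this is done, the conclusion is immediate from that theorem and the contraction-mapping argument used there.

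First I would identify the underlying Hilbert space $\mat{H}=L^{2}[0,1]$ and the operator $A v = v''$ on $\mathcal{D}(A)$, which was already shown above to generate an exponentially stable $C_{0}$-semigroup $(T(t))_{t\geq 0}$ with $\|T(t)\|\leq e^{-\pi^{2}t}$. Thus in the notation of (H1) we may take $M=1$ and $a=\pi^{2}$. Next I would check that the nonlinearities $f(t,u)=u\,\psi(t)$ and $g(t,u)=u\,\phi(t)$ satisfy the two assumptions required in Theorem \ref{main}: (i) they are square mean $\omega$-periodic limit processes in $t$ uniformly on bounded subsets of $\LL^{2}(\PP,\mat{H})$, and (ii) they are Lipschitz in the second variable with constants $L_{f}=\|\psi\|_{\infty}$ and $L_{g}=\|\phi\|_{\infty}$. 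The Lipschitz estimates are immediate because $\E\|u\psi(t)-v\psi(t)\|^{2}=|\psi(t)|^{2}\E\|u-v\|^{2}\leq \|\psi\|_{\infty}^{2}\E\|u-v\|^{2}$ and similarly for $g$; the periodic limit property reduces to the deterministic statement that $\psi,\phi$ are $\omega$-periodic limit functions in the sense of \cite{xie1}, which was arranged by the explicit choice $\psi=\phi=a_{\{k_{n}\}}$.

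With $M=1$, $a=\pi^{2}$, $L_{f}=\|\psi\|_{\infty}$ and $L_{g}=\|\phi\|_{\infty}$, the key contraction condition
\[
2M^{2}\Bigl(L_{f}\frac{1}{a^{2}}+L_{g}\frac{1}{a}\Bigr)<1
\]
of Theorem \ref{main} becomes
\[
\frac{2\|\psi\|_{\infty}}{\pi^{4}}+\frac{2\|\phi\|_{\infty}}{\pi^{2}}<1,
\]
which, after multiplying by $\pi^{4}/2$, is exactly the hypothesis $\|\psi\|_{\infty}+\|\phi\|_{\infty}\pi^{2}<\pi^{4}/2$ of the theorem. Hence Theorem \ref{main} applies directly and yields a unique square mean asymptotically $\omega$-periodic mild solution to (\ref{eq: example}).

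The main subtlety, rather than any hard estimate, lies in item (i) above: one must verify that the composition $t\mapsto f(t,u)=\psi(t)u$ is a square mean $\omega$-periodic limit process uniformly for $u$ in bounded sets of $\LL^{2}(\PP,\mat{H})$. This however follows from $\E\|\psi(t+n\omega)u-\tilde{\psi}(t)u\|^{2}=|\psi(t+n\omega)-\tilde{\psi}(t)|^{2}\E\|u\|^{2}$ together with the scalar convergence $\psi(t+n\omega)\to\tilde{\psi}(t)$ provided by the construction of $a_{\{k_{n}\}}$, and an analogous argument for $g$. Once this routine composition check is done, the theorem is a direct corollary of Theorem \ref{main}.
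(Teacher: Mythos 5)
Your proposal is correct and takes essentially the same route as the paper: identify $\mat{H}=L^{2}[0,1]$, use the semigroup bound $\|T(t)\|\leq e^{-\pi^{2}t}$ to take $M=1$, $a=\pi^{2}$, read off $L_{f}=\|\psi\|_{\infty}$, $L_{g}=\|\phi\|_{\infty}$, and observe that the contraction condition $2M^{2}(L_{f}/a^{2}+L_{g}/a)<1$ of Theorem~\ref{main} is exactly the stated inequality. The only caveat — which you share with the paper — is that your own displayed estimate yields the mean-square Lipschitz constant $\|\psi\|_{\infty}^{2}$ rather than $\|\psi\|_{\infty}$, so strictly the condition should involve $\|\psi\|_{\infty}^{2}$ and $\|\phi\|_{\infty}^{2}$ (a harmless discrepancy for the explicit choice $\psi=\phi=a_{\{k_{n}\}}$, whose sup norm is $1$).
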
	

\subsection*{Conflict of Interests}
The authors declare that there is no conflict of interest regarding the publication of this paper.
\subsection*{Acknowledgements}
We thank the anonymous reviewers for their careful reading and their many insightful comments and suggestions.

 \end{document}